\newcommand{\N}{\mathbb{N}}
\newcommand{\Z}{\mathbb{Z}}
\renewcommand{\P}{\mathbb{P}}
\newcommand{\R}{\mathbb{R}}
\newcommand{\C}{\mathbb{C}}
\newtheorem{theorem}{Theorem}
\newtheorem*{theorem*}{Theorem}
\newtheorem*{conj}{Conjecture}
\newtheorem*{definition}{Definition}
\newtheorem*{remark*}{Remark}
\newtheorem{lemma}{Lemma}
\newtheorem*{lemma*}{Lemma}
\newtheorem{prop*}{Proposition}
\newtheorem*{corollary*}{Corollary}
\newtheorem{corollary}{Corollary}
\newcommand{\congruence}[3]{\ensuremath{{#1}\equiv {#2}\bmod{#3}}}
\newcommand{\1}[1]{\mathds{1}\left[#1\right]}
\newcommand{\mycomment}[1]{}
\renewcommand{\epsilon}{\varepsilon}
\renewcommand{\phi}{\varphi}
\newcommand{\Chi}{\mathcal{X}}
\title{Large central values of Dirichlet L-functions in cosets}
\author{Ivan Ermoshin}
\date{}
\begin{document}

\maketitle
\begin{abstract}
    In this paper we consider the distribution of large central values of Dirichlet $L$-functions over cosets of the group of characters modulo $q$ via Soundararajan's resonator method.
\end{abstract}

\tableofcontents

\section{Introduction}

\vspace{1em}
Let $\chi\in\mathcal{X}$, where $\mathcal{X}$ denotes a group of Dirichlet characters modulo $q>1$. Then the $L$-function attached to $\chi $ is defined for $\Re s>1$ by $$L(\chi,s)=\sum_{n=1}^\infty \frac{\chi(n)}{n^s}$$
and can be analytically continued to an entire function satisfying a certain functional equation except the principal one $\chi_0$, in which case it is meromorphic
with a pole at $s=1$. See subsection \ref{fe} for the functional equation for primitive characters. 

It is known that the zeros of such $L$-functions either are negative integers or lie in the critical strip $0<\Re s<1$. The generalized Riemann hypothesis states that all the non-trivial zeros lie on the critical line $\Re s=\frac{1}{2}.$ On top of that, we are interested in statistical behaviour of $L$-functions inside the critical strip. 

We are interested not only in the behaviour of an individual $L$-function, but also in behaviour on average over some families. In particular we consider central values of family of Dirichlet $L$-functions of  modulo $q$
$$\{L(\chi,1/2)\mid \chi\in\mathcal{X}\}.$$

There are plenty of results and conjectures on statistical behavior of $L$-functions. 

For example, the well known Selberg central limit theorem for Riemann zeta \cite{clt}, that says that the limiting distribution of normalized zeta functions on critical line is the normal distribution.
        $$\lim_{T\to\infty}\text{meas}\{T\le t\le 2T\mid \log|\zeta(1/2+it)|\ge V\sqrt{\frac{1}{2}\log\log T}\}=\frac{1}{2\pi}\int_V^\infty e^{-u^2/2}du.$$

    Later Selberg proved that the same statement holds for $L$-function attached to primitive Dirichlet characters, see \cite{zbMATH00421677}.

    Due to results on moments of $L$-functions it is believed that $L$-functions are not too large on the critical line. More precisely, there is the following conjectured bound due to Lindel\"{o}f, which follows from GRH.
    $$|L(\chi, 1/2+it)|\ll (q|t|+1)^\epsilon.$$

\subsection{Large values in cosets of characters}
In this paper we study extreme central values of Dirichlet $L$-functions and their distributions inside the characters group modulo $q$. 

Mostly we follow Soundararajan's paper \cite{sound}, where he proves certain bounds for extreme values for zeta on the critical line and for central values over families of quadratic characters and $L$-functions attached to cusp forms. 

It is important to say that there are some heuristics based on statistical fact that distances between zeros behaves like distances between eigenvalues of random matrices, it is called \emph{type of a family} (either unitary, orthogonal or symplectic), unitary in case of all characters modulo $q$. It allows to construct precise enough probabilistic model which involves both random matrices and approach with random Euler product which produces widely believed conjectures.

This leads to the following conjecture made by Gonek, Hughes and Farmer in \cite{farmer2006maximum}.

\begin{conj}[Gonek-Hughes-Farmer]
    $$\max\limits_{\chi\bmod q}|L(\chi,1/2)|=\exp((\sqrt{1/2}+o(1))\sqrt{\log q\log\log q}).$$
\end{conj}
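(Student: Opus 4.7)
The conjectured equality has two sides, and I would attack them by different techniques.

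For the lower bound, the natural tool is Soundararajan's resonator method. I would introduce a Dirichlet polynomial
$$R(\chi) = \sum_{n \le N} r(n)\chi(n)$$
supported on $n \le N = q^{1/2-\epsilon}$ with a well-chosen multiplicative coefficient sequence $r$, and exploit the trivial first-moment inequality
$$\max_{\chi \in \mathcal{X}} L(\chi, 1/2) \;\ge\; \frac{\sum_{\chi} L(\chi, 1/2)\, |R(\chi)|^2}{\sum_{\chi} |R(\chi)|^2}.$$
The denominator is evaluated by orthogonality of characters. For the numerator I would expand $L(\chi, 1/2)$ by the approximate functional equation derived from the paper's \emph{fe}, and again invoke orthogonality to reduce the computation to an arithmetic sum of the shape $\sum_{mn = k\ell,\, k\ell \le \sqrt{q}} r(m) r(n) (k\ell)^{-1/2}$. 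The coefficients would be taken multiplicative on squarefrees with $r(p) = L/(\sqrt{p}\log p)$ for primes $p$ in a carefully tuned window, in the spirit of Soundararajan, or enhanced by the Bondarenko--Seip long-resonator construction when orthogonality must be restricted to a coset.

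For the upper bound, assuming GRH, I would follow the conditional moment approach. Soundararajan's method (as refined by Harper) controls $\log|L(\chi,1/2)|$ by a short prime Dirichlet polynomial up to an acceptable error, and together with high-moment estimates of the form $\sum_\chi |L(\chi,1/2)|^{2k} \ll \phi(q)(\log q)^{k^2}$ and a Chebyshev-type argument, one rules out any $|L(\chi, 1/2)|$ exceeding $\exp(c' \sqrt{\log q \log\log q})$ with constant $c'$ approaching $\sqrt{1/2}$ as $k$ grows in the right way with $q$.

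The central obstacle is matching the constant $\sqrt{1/2}$ on both sides. Even with the long-resonator refinement, the resonator method is only known to produce a lower bound of order $\exp\bigl(c \sqrt{\log q \cdot \log\log\log q / \log\log q}\bigr)$, which falls short of the conjectured magnitude by a factor $\sqrt{\log\log q / \log\log\log q}$; on the other side, unconditional sharp asymptotics for moments up to a growing order $2k$ are well beyond current technology. Closing this gap seems to require a genuinely new input -- either an unconditional random-matrix link for the unitary family $\{\chi \bmod q\}$, or a resonator construction significantly richer than the currently available ones. The realistic goal is therefore not the full equality, but rather to push the lower bound as close to the conjectural exponent $\sqrt{1/2}\sqrt{\log q\log\log q}$ as the resonator machinery permits, and to adapt that machinery to the coset setting which is the focus of the present paper.
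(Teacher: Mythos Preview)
The statement you are asked to prove is labeled as a \emph{Conjecture} in the paper, not a theorem, and the paper offers no proof of it whatsoever --- it is cited as motivation from Gonek, Hughes and Farmer and left entirely open. Your proposal correctly recognizes this: you outline the standard resonator approach for the lower bound and a conditional moment approach for the upper bound, and you explicitly concede that neither reaches the constant $\sqrt{1/2}$ and that ``closing this gap seems to require a genuinely new input.'' That assessment is accurate and matches the current state of the art.

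So there is no ``paper's own proof'' to compare against. Your discussion is a reasonable survey of why the conjecture remains open, but it is not --- and does not claim to be --- a proof. If the task was to reproduce the paper's argument for this statement, the honest answer is that there is none: the paper's actual contribution begins with the weaker unconditional lower bounds in Theorems~\ref{thm1} and~\ref{thm2}, which is precisely where your final sentence points.
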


While the best known order of magnitude is due to Bret\`eche and Tenenbaum \cite{bt}.
\begin{theorem*}[Bret\`eche, Tenenbaum]
$$\max\limits_{\substack{\chi\bmod q\\\chi\text{even}}}|L(\chi,1/2)|\gg\exp\left((1+o(1))\sqrt{\frac{\log q\log\log\log q}{\log\log q}}\right).$$
\end{theorem*}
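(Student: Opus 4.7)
The plan is to apply Soundararajan's resonator method in the Bondarenko--Seip refinement, adapted to a family of Dirichlet characters. For a carefully chosen multiplicative weight $r\colon\N\to[0,\infty)$ supported on a set $\mathcal{M}$ of squarefree integers coprime to $q$, I would define
\begin{equation*}
R(\chi)=\sum_{n\in\mathcal{M}} r(n)\chi(n),
\end{equation*}
and, after discarding the negligible contribution of the principal character, invoke the standard bound
\begin{equation*}
\max_{\substack{\chi\bmod q\\ \chi\text{ even}}} |L(\chi,1/2)| \;\gg\; \frac{\sum_{\chi\text{ even}} |R(\chi)|^{2}\,L(\chi,1/2)}{\sum_{\chi\text{ even}} |R(\chi)|^{2}}.
\end{equation*}

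First I would apply the approximate functional equation for even characters to write $L(\chi,1/2)$ as a smoothly truncated sum $\sum_{k}\chi(k)\,V(k/\sqrt{q})/\sqrt{k}$. Expanding $|R(\chi)|^{2}$ and averaging over the even subgroup via orthogonality, the numerator collects triples $(m,n,k)$ with $mk\equiv \pm n\pmod{q}$, while the denominator collects $m\equiv \pm n\pmod{q}$. Imposing the length cap $\max\mathcal{M}\le q^{1/2-\eta}$ for some small $\eta>0$ forces both congruences to become genuine equalities: in the numerator the ``$+$'' case yields $mk=n$ (hence $m\mid n$) and the ``$-$'' case is vacuous, while in the denominator only $m=n$ survives. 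The ratio therefore reduces to the weighted G\'al-type expression
\begin{equation*}
\frac{1}{\sum_{n\in\mathcal{M}} r(n)^{2}}\sum_{\substack{m,n\in\mathcal{M}\\ m\mid n}}\frac{r(m)r(n)}{\sqrt{n/m}}\,V\!\left(\frac{n/m}{\sqrt{q}}\right).
\end{equation*}

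Next I would specify $r$ and $\mathcal{M}$ \`a la Bondarenko--Seip. Partition the admissible primes $p\le q^{1/2-\eta}$ into dyadic windows $I_{j}$, set $r(p)=c_{j}/\sqrt{p}$ on $I_{j}$, and restrict $\mathcal{M}$ to integers $n$ with a prescribed typical number $\nu_{j}$ of prime factors in each $I_{j}$. Then $\sum r(n)^{2}$ factors as an Euler-type product $\prod_{j}\prod_{p\in I_{j}}(1+c_{j}^{2}/p)$ while the G\'al numerator factors as $\prod_{j}\prod_{p\in I_{j}}(1+c_{j}^{2}/p+c_{j}/\sqrt{p}+\cdots)$, and both products are evaluated via Mertens' theorem. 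Optimising the parameters $c_{j}$, the widths of $I_{j}$ and the counts $\nu_{j}$ (essentially the variational problem solved by Bondarenko--Seip) yields the claimed exponent $(1+o(1))\sqrt{\log q\log\log\log q/\log\log q}$.

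The principal obstacle is reconciling the length cap $\prod_{p\mid n}p\le q^{1/2-\eta}$ with the G\'al-sum extremiser. The na\"ive Soundararajan choice $r(p)\asymp(\log q)/(\sqrt{p}\log p)$ automatically respects the cap but yields only $\exp(\sqrt{\log q/\log\log q})$; to gain the extra $\sqrt{\log\log\log q}$ one must use the multi-scale construction above and verify by a Chernoff-type concentration on $\sum_{p\mid n}\log p$ (under the prescribed counts $\nu_{j}$) that a positive proportion of the mass $\sum r(n)^{2}$ is supported on $n\le q^{1/2-\eta}$, and then check that truncating $\mathcal{M}$ to enforce this does not degrade the numerator. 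Handling the error term in the approximate functional equation, averaging via the projector $\tfrac12(1+\chi(-1))$ to restrict to even characters, and controlling the principal-character contribution are comparatively routine.
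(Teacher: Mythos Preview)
The paper does not prove this statement: it is quoted as a known result of Bret\`eche and Tenenbaum \cite{bt}, so there is no in-paper proof to compare against. However, the paper does comment on the method, and that comment exposes a genuine gap in your proposal.

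You resonate against the first power $L(\chi,1/2)$. After orthogonality and the length cap $\max\mathcal{M}\le q^{1/2-\eta}$, your ratio is
\[
\frac{1}{\sum_{n\in\mathcal{M}} r(n)^{2}}\sum_{\substack{m,n\in\mathcal{M}\\ m\mid n}}\frac{r(m)r(n)}{\sqrt{n/m}}.
\]
Extending $r$ by zero outside $\mathcal{M}$ and writing $n=mk$, this is bounded above by
\[
\frac{1}{\sum_{n\le N} r(n)^{2}}\sum_{mk\le N}\frac{r(m)r(mk)}{\sqrt{k}},\qquad N=\max\mathcal{M}\le q^{1/2-\eta},
\]
which is precisely the quantity in Soundararajan's optimisation theorem quoted in Section~\ref{sound}. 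That theorem is an \emph{equality}: the maximum over all $r$ is $\exp\bigl((1+o(1))\sqrt{\log N/\log\log N}\bigr)$. No choice of $r$, multi-scale or otherwise, can extract the extra factor $\sqrt{\log\log\log q}$ from this ratio; your final paragraph asserts exactly what Soundararajan's upper bound forbids.

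The paper says explicitly how Bret\`eche--Tenenbaum avoid this ceiling: ``They apply the resonator method to $|L(\chi,1/2)|^2$ while we do it for $L(\chi,1/2)$. Therefore they get so called G\'al sums to optimize.'' Resonating against $|L|^2$ produces, on the diagonal $m_1k_1=m_2k_2$, the genuine G\'al kernel $\sqrt{(m_1,m_2)/[m_1,m_2]}$ over \emph{pairs} $m_1,m_2\in\mathcal{M}$ with no divisibility constraint; moreover the positivity of $|R|^2|L|^2$ lets one control off-diagonal terms by the \emph{cardinality} $|\mathcal{M}|$ rather than $\max\mathcal{M}$, which is what allows the long Bondarenko--Seip resonator. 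Your divisibility-restricted sum is not a G\'al sum in this sense, and your length cap is the wrong constraint. To salvage the argument you must switch to $|L(\chi,1/2)|^2$ (or implement a long-resonator scheme) before invoking the Bondarenko--Seip construction.
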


We will be interested in large values when $\chi$ is restricted to a coset of a subgroup of $(\Z/q\Z)^\times$. We propose the following conjecture:
\begin{conj}
    Any coset of a subgroup $H\le\mathcal{X}$ such that $\#H\gg q^\epsilon$ contains an extreme value over the whole group, i.e. for any $\chi_1\in\Chi/H$
    $$\max\limits_{\chi\in \chi_1H}|L(\chi,1/2)|=\exp((\sqrt{1/2}+o(1))\sqrt{\log q\log\log q}).$$
    
\end{conj}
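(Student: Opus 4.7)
The plan is to adapt Soundararajan's resonator method to the coset setting. For a Dirichlet polynomial $R(\chi) = \sum_n r(n)\chi(n)$ one has the trivial bound
\begin{equation*}
\max_{\chi \in \chi_0 H} |L(\chi, 1/2)| \;\geq\; \left|\frac{\sum_{\chi \in \chi_0 H} |R(\chi)|^2 L(\chi, 1/2)}{\sum_{\chi \in \chi_0 H} |R(\chi)|^2}\right|,
\end{equation*}
and the task is to choose $R$ making this averaged first moment large. Writing $\chi = \chi_0\psi$ with $\psi \in H$, the governing orthogonality is
\begin{equation*}
\sum_{\psi \in H}\psi(n) \;=\; |H|\cdot\1{n\in H^\perp},
\end{equation*}
where $H^\perp := \{n \in (\Z/q\Z)^\times : \psi(n)=1 \text{ for all } \psi\in H\}$ is the annihilator subgroup of order $\phi(q)/|H|$. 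This replaces the diagonal condition of the classical setting by a congruence modulo $H^\perp$.

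After replacing $L(\chi,1/2)$ by a finite sum of length $\asymp \sqrt{q}$ via the approximate functional equation and expanding $|R(\chi)|^2$, the numerator becomes a sum over triples $(a,b,m)$ subject to $am \equiv bt \pmod{q}$ for some $t\in H^\perp$, weighted by $r(a)\overline{r(b)}/\sqrt m$ and a phase depending on $\chi_0$. The term $t=1$ reproduces Soundararajan's main term, while the remainder is a new off-diagonal contribution peculiar to the coset setting. Taking $r$ multiplicative, supported on integers built from primes in a window $[\log^{C_1}q,\log^{C_2}q]$ with Soundararajan's weights $r(p)\asymp L/(\sqrt p\,\log p)$, and truncating the support to integers of size at most $q^{\theta}$ for some small $\theta>0$, forces $abm \le q^{1-\delta}$ throughout. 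Provided $H^\perp$ contains no nontrivial element below $q^{1-\delta}$, the congruence $am \equiv bt \pmod q$ in that range admits only $t=1$, and the off-diagonal contribution vanishes identically. Optimizing the resonator along Bondarenko--Seip lines should then yield a coset analogue of the Bret\`eche--Tenenbaum bound, of shape $\exp\bigl(c\sqrt{(\log q)(\log\log\log q)/\log\log q}\bigr)$.

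The principal obstacle is the regime in the conjecture where $|H|$ is as small as $q^\epsilon$: then $|H^\perp|\approx q^{1-\epsilon}$, and $H^\perp$ may contain many small integers, so the off-diagonal terms with $t\neq 1$ cannot be dismissed by support considerations alone. Handling this regime requires a genuine new input beyond orthogonality, either an arithmetic structure result constraining the distribution of small elements of $H^\perp$ so as to bound the off-diagonal directly, or a more delicate choice of the resonator support $A$ guaranteeing that the multiset $\{ab\bar m : a,b \in A,\, m \le \sqrt q\}$ avoids $H^\perp \setminus \{1\}$. The hypothesis $|H|\gg q^\epsilon$ must be leveraged decisively at this point. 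A much deeper and separate obstacle concerns the conjectural constant $\sqrt{1/2}$: it already surpasses the best unconditional bound even for $H=\Chi$, and recovering it appears to require making the random Euler product heuristic rigorous, which is out of reach by present methods. Consequently the resonator approach as it stands should only deliver a weakened form of the stated conjecture, with the weaker exponent $\sqrt{(\log q)(\log\log\log q)/\log\log q}$ in place of $\sqrt{(\log q)(\log\log q)/2}$.
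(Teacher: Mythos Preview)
The statement you are addressing is a \emph{conjecture} in the paper, not a theorem; the paper offers no proof of it and indeed remarks that even the case $H=\mathcal X$ with the constant $\sqrt{1/2}$ is out of reach. Your proposal correctly diagnoses this: you observe that the conjectured exponent $\sqrt{(\log q)(\log\log q)/2}$ exceeds what any resonator argument can currently deliver, and that even a weakened target of Bret\`eche--Tenenbaum strength would require new control on the small elements of $H^\perp$ when $|H|\asymp q^\epsilon$. All of this is accurate, and in that sense your ``proof proposal'' is really an honest explanation of why the conjecture is open.

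It is worth noting how your outlined partial approach compares with what the paper actually proves. The paper does not attempt the Bondarenko--Seip/Bret\`eche--Tenenbaum optimisation in the coset setting; it applies only the original Soundararajan resonator, yielding the weaker exponent $\sqrt{\log q/\log\log q}$. For the off-diagonal problem you identify, the paper's device (its Lemma~\ref{lemma5}) is elementary: if $h\in\ker H$ has order dividing $2K$ where $K=\#\ker H$, then raising $hnn_1\equiv n_2\pmod q$ to the $2K$-th power and factoring gives a polynomial congruence that forces $h=1$ once $N\le q^{1/(2(2K-1))}$. This is far more restrictive than your hoped-for support condition ``$H^\perp$ contains no nontrivial element below $q^{1-\delta}$'': it limits the paper to indices $[\mathcal X:H]=O((\log q)^{1-\varepsilon})$, a vastly smaller range than the $\#H\gg q^\epsilon$ of the conjecture. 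So your proposal is more ambitious than the paper's actual partial results, but the mechanisms you would need---either structural information on small elements of $H^\perp$ or a resonator support avoiding $H^\perp\setminus\{1\}$---are not supplied, and the paper does not supply them either.
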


The conjecture resembles a straightforward corollary of Bourgain-Konyagin's theorem (see \cite{BOURGAIN200375} theorem 2.1) in the sense that we expect some random behaviour to happen in small subgroups of a large group.
\begin{corollary*}[Bourgain-Konyagin]
    For any coset $hH$ of a subgroup $H\le (\Z/q\Z)^\times$ such that $\#H\gg q^\epsilon$
    $$\text{set of rationals }\left\{\frac{a}{q}\mid a\in hH\right\}\text{ is equidistributed in common sense on $[0,1]$ interval.}$$
\end{corollary*}

The behaviour of central values of $L$-functions associated to subgroups have been also studied by Munsch and Shparlinski \cite{munschshpa}. Using mollifiers they have proved non-vanishing for a positive proportion of character in a subgroup of sub-logarithmic index for prime conductors. Not to mention restricting to almost all primes allows them to state the positive proportion result for much smaller subgroups of polynomial index.

\subsection{Overview of results}

By the original resonator method with some adjustments we can obtain large values in subgroups of infinite index, in particular for any power of $\log q$.

\begin{theorem}\label{thm1}
    For large prime $q$ and $H\le\mathcal{X}$ satisfying
    $\log[\mathcal{X}:H]=o\left(\mathcal{L}\right)$ we have
    $$\max\limits_{\chi\in H} |L(\chi,1/2)|\ge \exp\left(\left(\frac{1}{\sqrt{3}}+o(1)\right)\mathcal{L}\right),\text{ where $\mathcal{L}=\sqrt{\log q/\log\log q}.$}$$
\end{theorem}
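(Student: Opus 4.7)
The plan is to apply Soundararajan's resonator method directly to the restricted family $\{L(\chi,1/2) : \chi \in H\}$. Fix a resonator
\begin{equation*}
R(\chi) = \sum_{n \leq N} r(n)\,\chi(n),
\end{equation*}
with $r$ non-negative, multiplicative, and supported on squarefree integers whose prime factors lie in a carefully chosen range, exactly as in \cite{sound}. The starting inequality is
\begin{equation*}
\max_{\chi \in H} |L(\chi,1/2)| \;\geq\; \frac{\bigl|\sum_{\chi \in H} L(\chi,1/2)\,|R(\chi)|^{2}\bigr|}{\sum_{\chi \in H}|R(\chi)|^{2}},
\end{equation*}
and the target is to bound the right-hand side below by $\exp\bigl((1/\sqrt{3}+o(1))\mathcal{L}\bigr)$.

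To evaluate both sums, I would substitute the approximate functional equation for $L(\chi,1/2)$ and expand $|R(\chi)|^{2}=\sum_{a,b}r(a)\overline{r(b)}\,\chi(ab^{-1})$. Swapping summations, the inner character average is computed by orthogonality on $H$:
\begin{equation*}
\sum_{\chi \in H} \chi(c) \;=\; |H|\cdot\1{c \in K}, \qquad K := \bigl\{c \in (\Z/q\Z)^{\times} : \chi(c)=1\text{ for every }\chi \in H\bigr\},
\end{equation*}
with $|K|=[\Chi:H]$. Both numerator and denominator therefore split as $|H|$ times a sum indexed by $k \in K$, each term collecting those integer tuples whose relevant combination is congruent to $k$ modulo $q$.

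The diagonal piece is the contribution of $k=1$. On it the calculation coincides essentially verbatim with Soundararajan's analysis of the unrestricted unitary character family modulo $q$: optimizing the prime weights $r(p)$ and taking $N$ as large as $q^{1-o(1)}$ reproduces the expected main term $\exp\bigl((1/\sqrt{3}+o(1))\mathcal{L}\bigr)$ for the ratio. This step introduces no new ideas beyond \cite{sound} and in particular the root-number term from the approximate functional equation is handled by the standard Gauss-sum manipulations.

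The decisive step --- and the main obstacle --- is bounding the off-diagonal contribution from $k \in K \setminus \{1\}$. For each such $k$ the congruence rigidly ties the AFE variables (of size $\lesssim\sqrt{q}$) to those in the support of $R$ (of size $\leq N$), and a divisor-bound argument restricts each $k$ to at most $O(N^{2}/q)$ valid tuples, weighted by the natural resonator factors. Summing over $K$ gives an off-diagonal mass of order at most $|K|\,N^{2}/q$ against a diagonal of order $N$. The hypothesis $\log[\Chi:H]=o(\mathcal{L})$ is calibrated exactly so that $N$ may still be taken close to $q/|K|$ while every off-diagonal contribution remains strictly dominated by the diagonal; balancing these choices with Soundararajan's main-term computation delivers the claimed lower bound.
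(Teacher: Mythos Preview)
Your off-diagonal estimate is the gap. You claim a divisor-bound argument gives $O(N^{2}/q)$ valid tuples per $k\in K\setminus\{1\}$, but this is false without structural information on $K$: if, say, $2\in K$, then $n_{1}\equiv 2n_{2}\pmod q$ already has $\asymp N$ solutions with $n_{1},n_{2}\le N$, and the weighted sum $\sum r(n_{1})r(n_{2})\1{n_{1}\equiv kn_{2}}$ can be as large as $\sum r(n)^{2}$ for each such $k$ (this is exactly the rearrangement bound). The paper explicitly flags this as ``the main difficulty\ldots the congruence condition\ldots contains an arbitrary element from the kernel of a subgroup, whose size is seemingly impossible to control.'' Your heuristic $|K|\,N^{2}/q$ is an average over all $k$, not a uniform bound, and there is no way to enforce it for an arbitrary subgroup.

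The paper sidesteps this entirely by a positivity/rearrangement trade. For the denominator $M_{1}$ it does \emph{not} isolate $k=1$: it bounds every $k$-slice by $\sum r(n)^{2}$ via rearrangement, yielding $M_{1}\le\phi(q)\sum r(n)^{2}$ and deliberately losing a factor $[\Chi:H]$. For the numerator $M_{2}$, the principal-sum contribution of each $k\neq 1$ is \emph{non-negative} (since $V_{\kappa}\ge 0$ and the indicator is $\ge 0$), so one simply drops those terms to get a lower bound; no off-diagonal cancellation is needed. The hypothesis $\log[\Chi:H]=o(\mathcal{L})$ is then used to absorb the factor $\#H/\phi(q)=1/[\Chi:H]$ in the ratio $M_{2}/M_{1}$, not to control $N$. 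Finally, your value $N=q^{1-o(1)}$ is incompatible with the constant $1/\sqrt{3}$: the dual-sum error term $\phi(q)N/(q^{1/4}X^{1/2})$ together with the constraint $XN\ll q^{1/2}$ forces $N=q^{1/3}$, and it is $\sqrt{\log N/\log\log N}=(1/\sqrt{3})\mathcal{L}$ that produces the stated constant.
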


The method however breaks down for non-trivial coset. On the other hand when the index is finite, by using a much shorter resonator, we obtain values in an arbitrary coset nearly as large as in the subgroup setting.

\begin{theorem}\label{thm2}
    For large prime $q$ let $H\le\mathcal{X}$ such that $[\mathcal{X}:H]=o\left(\frac{\log q}{\log\log q}\right)$.

    Then if a coset $\chi_1H$ contains both even and odd characters we have that
    $$\max_{\chi\in\chi_1H}|L(\chi,1/2)|\ge\exp{\left(\frac{1}{\sqrt{2[\mathcal{X}:H]-1}}\left(\frac{1}{\sqrt{3}}+o(1)\right)\mathcal{L}\right)}.$$

    While if a coset $\chi_2H$ is contained in even or odd characters the bound becomes
    $$\max_{\chi\in\chi_2H}|L(\chi,1/2)|\ge\exp{\left(\frac{1}{\sqrt{[\mathcal{X}:H]-1}}\left(\frac{1}{\sqrt{3}}+o(1)\right)\mathcal{L}\right)}.$$
\end{theorem}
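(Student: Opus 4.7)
The plan is to adapt Soundararajan's resonator method used for Theorem \ref{thm1} to restriction on a coset $\chi_1 H$ rather than a subgroup. The core inequality is
$$\max_{\chi \in \chi_1 H}|L(\chi,1/2)| \;\ge\; \frac{\bigl|\sum_{\chi \in \chi_1 H} L(\chi,1/2)\,|R(\chi)|^2\bigr|}{\sum_{\chi \in \chi_1 H} |R(\chi)|^2}$$
for a resonator $R(\chi) = \sum_{n \le N} r(n) \chi(n)$ to be chosen. The two main inputs are the approximate functional equation, writing $L(\chi,1/2)$ as a main sum plus a dual sum twisted by the root number $\epsilon(\chi)$, with smooth cutoffs $V_\kappa$ depending on the parity $\kappa$ of $\chi$, and the coset orthogonality $\sum_{\chi \in \chi_1 H}\chi(n) = |H|\,\chi_1(n)\,\mathds{1}[n \in H^\perp \bmod q]$, where $H^\perp \le (\Z/q\Z)^\times$ is the annihilator of $H$, a subgroup of order $k := [\mathcal{X}:H]$.

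I would choose $r(n)$ multiplicative, supported on squarefree integers with prime factors in a prescribed range, with $r(p) \asymp L/(\sqrt p \log p)$ in the spirit of the classical construction. The crucial new feature is that the support length $N$ must be shorter than in Theorem \ref{thm1}, so that for $a,b \le N$, $m$ in the AFE range, and each representative $h \in H^\perp$, the congruence $ma \equiv hb \pmod q$ either collapses to the integer identity $ma = hb$ or contributes negligibly. The hypothesis $k = O((\log q)^{1-\epsilon})$ is exactly what is needed to leave enough room for the resonator parameter $L$ to be chosen close to its optimum after this shortening.

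Applying orthogonality, the denominator becomes $|H|\sum_{h \in H^\perp}\chi_1(h)\sum_{a=hb,\,a\le N} r(a) r(b)$, a sum of $k$ contributions indexed by $H^\perp$; the $h=1$ term is the classical diagonal and the remaining $k-1$ terms inflate the denominator. The numerator splits analogously from the main AFE piece, while the dual AFE piece contributes an additional root-number-twisted sum whose behaviour is parity-sensitive. When $\chi_1 H$ is pure (a single parity) the root number structure is uniform and the effective inflation is $k-1$; when $\chi_1 H$ is mixed, parity-dependent cancellations between the even and odd halves of the coset interact with the reflection through the functional equation to produce the larger inflation factor $2k-1$. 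Re-optimizing $L$ subject to the shorter $N$ and evaluating the Euler-product main terms via the multiplicativity of $r$ exactly as in Theorem \ref{thm1} then yields the claimed factor $1/\sqrt{2k-1}$ or $1/\sqrt{k-1}$ multiplying the constant $1/\sqrt 3$.

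The main obstacle is the evaluation of the off-diagonal sums $\sum_{a = hb \le N} r(a) r(b)$ for $h \in H^\perp \setminus \{1\}$: although $r$ is multiplicative, the relation $a = hb$ couples the indices through the factorization of $h$ and is not a clean Euler product, so these sums must be decomposed along the prime factors of $h$ and carefully compared with the diagonal to show they inflate the denominator by exactly the claimed factor. A related difficulty is the parity-by-parity bookkeeping of the root-number averaging in the dual AFE piece, which must be carried out to extract the exact constants $2k-1$ and $k-1$ and to check their sharpness against the resonator re-optimization.
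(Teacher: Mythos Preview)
Your setup---the resonator quotient, coset orthogonality via $H^\perp=\ker H$, and the approximate functional equation with parity splitting---matches the paper. But the mechanism you describe for the constants $2k-1$ and $k-1$ is not the one that actually works, and the ``main obstacle'' you flag is not the real obstacle.

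You write that the $h\ne 1$ terms ``inflate the denominator'' and that the off-diagonal sums $\sum_{a=hb\le N}r(a)r(b)$ must be evaluated and compared with the diagonal. In the paper these sums are not evaluated: they are shown to \emph{vanish}. The key step (Lemma~\ref{lemma5}) is an algebraic trick exploiting the only structural information available about $h\in\ker H$, namely $h^{K}\equiv 1\pmod q$ (or $h^{2K}\equiv 1$ in the mixed case, where one must pass to $\ker(H\cap\mathcal{X}_0)=\ker H\cup(-\ker H)$ of order $2K$). Raising the congruence $hnn_1\equiv n_2$ to the $m$-th power with $m=K$ or $2K$ kills $h$, and the factorization
\[
(nn_1)^m-n_2^m=(nn_1-n_2)\bigl((nn_1)^{m-1}+\cdots+n_2^{m-1}\bigr)
\]
then forces, for $h\ne 1$, the positive cofactor of degree $m-1$ to vanish mod $q$. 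This is impossible once $N$ is taken roughly $q^{1/(3(m-1))}$. That exponent of $N$ is what produces, through Soundararajan's optimisation, the factor $1/\sqrt{m-1}$ in front of $\mathcal{L}/\sqrt{3}$: hence $1/\sqrt{2K-1}$ in the mixed case and $1/\sqrt{K-1}$ in the pure case.

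So the constants do not arise from counting $k-1$ off-diagonal contributions nor from any root-number cancellation in the dual sum (the dual sum is bounded trivially in both cases, exactly as in Section~3). Your plan to ``decompose along the prime factors of $h$'' cannot succeed in general, because the elements of $\ker H$ are arbitrary residues mod $q$ with no controlled integer representatives; without the $h^m\equiv 1$ trick there is no way to choose $N$ so that $ma\equiv hb$ collapses to anything useful. The condition $K=O((\log q)^{1-\varepsilon})$ then enters to ensure the implicit losses in Lemma~\ref{lemma5} (of size $K\log K/\log q$ in the exponent of $N$) are genuinely $o(1)$.
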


But we do not get good bounds for larger indices, so the "usable" range is not as wide as one can ask for. It would be interesting if one can remove this restriction.

A noticeable application is even and odd character subgroups, in this case we get
\begin{corollary*}
For large prime $q$ there exists a character $\chi$ modulo $q$ such that
\begin{equation}
        |L(\chi,1/2)|\ge\exp\left(\left(\frac{1}{\sqrt{3}}+o(1)\right)\mathcal{L}\right).
    \end{equation}
    Moreover, one can pick such a character either even or odd.
\end{corollary*}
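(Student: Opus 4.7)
The plan is to deduce the corollary directly from Theorems~\ref{thm1} and~\ref{thm2} by taking $H$ to be the subgroup of even characters modulo $q$.

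First I would set up the group-theoretic picture. Since $q>2$ is prime, $\mathcal{X}$ is cyclic of even order $q-1$, and the parity map $\chi \mapsto \chi(-1) \in \{\pm 1\}$ is a non-trivial homomorphism (a generator of order $q-1$ is sent to $-1$). Its kernel $H$ is the subgroup of even characters, so $[\mathcal{X}:H] = 2$, and the odd characters form the unique non-trivial coset $\chi_1 H$, which is contained entirely inside the set of odd characters.

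For an even witness I would apply Theorem~\ref{thm1} with this $H$. The hypothesis $\log[\mathcal{X}:H] = \log 2 = o(\mathcal{L})$ is immediate, so we obtain an even $\chi \in H$ satisfying $|L(\chi,1/2)| \ge \exp\left(\left(\frac{1}{\sqrt{3}}+o(1)\right)\mathcal{L}\right)$.

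For an odd witness I would apply Theorem~\ref{thm2} to the coset $\chi_1 H$. Since this coset is purely odd, we fall into the second case of the statement, and with $[\mathcal{X}:H] = 2$ the loss factor $\frac{1}{\sqrt{[\mathcal{X}:H]-1}}$ equals $1$, so Theorem~\ref{thm2} yields an odd $\chi \in \chi_1 H$ satisfying the same bound. Combining the two gives the stated witness in either parity. There is no genuine obstacle here: the corollary is a direct specialization of the two main theorems, the only thing to verify being that $H$ has index exactly $2$ (rather than $1$), which holds for every odd prime $q>2$.
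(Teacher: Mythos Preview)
Your proposal is correct and matches the paper's intended derivation: the corollary is presented immediately after Theorem~\ref{thm2} as an application to the index-$2$ subgroup of even characters, where the loss factor $1/\sqrt{[\mathcal{X}:H]-1}$ collapses to $1$. The only cosmetic difference is that you invoke Theorem~\ref{thm1} for the even (trivial) coset while the paper's framing suggests applying Theorem~\ref{thm2} to both cosets; since $[\mathcal{X}:H]=2$ makes the second case of Theorem~\ref{thm2} give exactly the same bound for the subgroup itself, the two routes are interchangeable.
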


The theorem gives a weaker bound than Bret\`eche and Tenebaum's one. They apply the resonator method to $|L(\chi,1/2)|^2$ while we do it for $L(\chi,1/2)$. Therefore they get so called G\'al sums to optimize, first considered by Bondarenko and Seip in \cite{bondarenkoseip} while working on large values of zeta. Due to their approach Bret\`eche and Tenebaum use positivity trick more than we do, so the method does not work for odd characters which is the simplest example of a non-trivial coset.

Also we would like to mention the paper of Munsch and Shparlinski \cite{munschshpa}. Applying their results on small solutions of congruences does not improve our general bounds, though we are able to strengthen the result for almost all primes.

\begin{remark*}
    We consider only prime $q$ for simplicity.
For arbitrary $q$ one firstly has to deal with non-primitive characters which have different approximate functional equation due to a different conductor, but that is not a major issue; secondly, the condition $(n,q)=1$ must be added to the support of resonator function in Theorem \ref{sound}, and this condition may modify the result and harden the proof. Nevertheless, it should be possible to prove at least the full group case, probably also even and odd cosets.
\end{remark*}

\section{Tools}

{\bf Notations:} The letters $p,q,r$ stand for prime numbers and $\P$ be the set of all primes. All logarithms are to base $e$.
The signs $\ll$, $\gg$ are usual Vinogradov signs, $f \asymp g$ means that $f \ll g$ and $g \ll f$, while $f \sim g$ means that $f \ll g$ and $g \ll f$ and the constants in both asymptotic signs coincide. The functions $(a,b)$ and $[a,b]$ stand for the greatest common divisor and the least common multiple of $a$ and $b$ respectively.

$e(x)=\exp(2\pi ix).$

Let $\mathcal{L}=\frac{\sqrt{\log q}}{\sqrt{\log\log q}}.$

Let $\mathcal{X}$ be the group of Dirichlet characters modulo $q$.

Then denote cosets of even and odd characters as $\mathcal{X}_0$ and $\mathcal{X}_1$ respectively.

\subsection{Auxiliary lemmas}
Here we explain some well known results not related to number theory that will be used in the following sections.
\begin{lemma}[Rearrangement inequality]\label{rearrangement}
For two arbitrary finite sequences of real numbers $x_1\le\ldots\le x_n$ and $y_1\le\ldots\le y_n$ and any permutation $\sigma\in S_n$ we have
$$x_1y_n+\cdots+x_ny_1\le\sum_{k=1}^n x_ky_{\sigma(k)}\le x_1y_1+\cdots+x_ny_n.$$
\end{lemma}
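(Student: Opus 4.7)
The plan is to prove the rearrangement inequality by the classical exchange (adjacent-transposition) argument, treating only the upper bound in detail since the lower bound follows by applying the upper bound to the sequences $x_1 \le \cdots \le x_n$ and $-y_n \le \cdots \le -y_1$.

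First I would reduce to adjacent swaps. Suppose $\sigma \in S_n$ is not the identity. Then there must exist indices $i < j$ with $\sigma(i) > \sigma(j)$; in fact, by choosing a pair of consecutive integers among the positions, I can assume $j = i+1$. Define $\sigma'$ to be the permutation obtained from $\sigma$ by swapping the values at positions $i$ and $j$. The key computation is then
\begin{equation*}
\sum_{k=1}^{n} x_k y_{\sigma'(k)} - \sum_{k=1}^{n} x_k y_{\sigma(k)} = x_i y_{\sigma(j)} + x_j y_{\sigma(i)} - x_i y_{\sigma(i)} - x_j y_{\sigma(j)} = (x_j - x_i)\bigl(y_{\sigma(i)} - y_{\sigma(j)}\bigr) \ge 0,
\end{equation*}
because $x_j \ge x_i$ (since $i < j$ and the $x$'s are nondecreasing) and $y_{\sigma(i)} \ge y_{\sigma(j)}$ (since $\sigma(i) > \sigma(j)$ and the $y$'s are nondecreasing). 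Hence every such swap does not decrease the sum.

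Next I would iterate. Counting inversions of $\sigma$ (pairs $(i,j)$ with $i < j$ and $\sigma(i) > \sigma(j)$), each adjacent swap of the type above strictly decreases the number of inversions by one, so after finitely many swaps the permutation becomes the identity. Chaining the inequalities gives
\begin{equation*}
\sum_{k=1}^{n} x_k y_{\sigma(k)} \le \sum_{k=1}^{n} x_k y_k,
\end{equation*}
which is the upper bound. Applying this to the sequence $-y_n \le \cdots \le -y_1$ in place of $y_1 \le \cdots \le y_n$ (and relabelling the permutation accordingly) yields the lower bound.

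There is no real obstacle here: the statement is classical and the exchange argument is elementary. The only thing to be careful about is justifying the reduction to adjacent transpositions, which follows from the well-known fact that any permutation factors as a product of adjacent transpositions, combined with the fact that an adjacent transposition applied at an inversion strictly reduces the inversion count.
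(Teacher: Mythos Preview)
Your argument is correct and is the standard exchange proof of the rearrangement inequality. The paper itself does not give a proof of this lemma at all; it merely states it as a well-known auxiliary result, so there is nothing to compare your approach against.
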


\subsection{Orthogonality relations}
    There are two types of (common-sense) orthogonalities for Dirichlet characters, for sums over all characters and for sum over all residues. However, we only need sums over characters.

The mentioned orthogonality over values of the argument is stated below.

\begin{lemma*}
    For a natural number $q$ and any character $\chi\in\mathcal{X}$ the following holds
    $$\sum_{a\text{ mod }q}\chi(a)= 
    \begin{cases}
        \phi(q),& \chi=\chi_0,\\
        0,&\text{otherwise.}
    
    \end{cases}$$
\end{lemma*}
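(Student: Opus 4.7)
The plan is to split into two cases according to whether $\chi$ is the principal character $\chi_0$.

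In the easy case $\chi=\chi_0$, one reads off from Definition \ref{char} that $\chi_0(a)=1$ on residues with $(a,q)=1$ and $\chi_0(a)=0$ otherwise. The sum $\sum_{a\bmod q}\chi_0(a)$ therefore counts exactly the reduced residues modulo $q$, which is $\phi(q)$ by the definition of the totient function.

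For the nontrivial case $\chi\neq\chi_0$, I would use the standard "multiply and reindex" trick. Since $\chi$ is not the principal character and vanishes on non-units, there exists $b\in\Z$ with $(b,q)=1$ and $\chi(b)\neq 1$. Writing $S=\sum_{a\bmod q}\chi(a)$, multiplicativity of $\chi$ (property (1) of Definition \ref{char}) gives
$$\chi(b)\,S=\sum_{a\bmod q}\chi(ab).$$
Because $b$ is invertible modulo $q$, the map $a\mapsto ab$ is a bijection on $\Z/q\Z$, so reindexing the right-hand side recovers $S$ itself. Hence $(\chi(b)-1)S=0$, and since $\chi(b)\neq 1$ we conclude $S=0$.

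The only mildly delicate point is that the sum ranges over all of $\Z/q\Z$, not just the units; this causes no trouble because $\chi$ vanishes on non-units and the bijection $a\mapsto ab$ preserves coprimality with $q$, so restricting to units or summing over the full residue system yields the same quantity. Consequently there is no real obstacle: the argument reduces to choosing a suitable $b$ and exploiting the group action of $(\Z/q\Z)^\times$ on residues.
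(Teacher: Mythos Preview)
Your proof is correct and is the standard argument. The paper actually states this lemma without proof, so there is nothing to compare against directly; however, exactly the ``multiply and reindex'' trick you use here is what the paper employs a few lines later in the proof of Lemma~\ref{principal_orthogonality}, so your approach is fully in line with the paper's methods.
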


Let us denote by $\ker H$ a set of residues $h$ where $\forall\chi\in H:\;\chi(h)=1$. That is,
\begin{definition}
$\ker H=\{h\in (\Z/q\Z)^\times\mid \forall\chi\in H:\; \chi(h)=1\}$.
\end{definition}

It is easy to see that $\#\ker H\cdot\#H=\phi(q)$. We will prove it later.

Algebraically speaking, any finite abelian group is isomorphic to its group of characters, and the proposition illustrates a natural one-to-one correspondence between subgroups of the group, and subgroups of the group of characters.

Note that Dirichlet characters are characters of $(\Z/q\Z)^\times$ extended to $\Z$.

\begin{lemma}\label{principal_orthogonality}
$$\sum_{\chi\in \chi_1H}\chi(a)=\begin{cases}
                        \chi_1(a)\#H,&a\in\ker H,\\
                        0,& \text{otherwise.}
                    \end{cases}$$
\end{lemma}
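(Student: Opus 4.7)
The plan is to reduce the statement for a coset to the corresponding statement for the subgroup $H$, and then prove the latter by the classical character-sum trick (a miniature version of the proof of the orthogonality lemma quoted just above).

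First I would factor out the coset representative. Since every $\chi\in\chi_1H$ has the form $\chi_1\psi$ for a unique $\psi\in H$, and since Dirichlet characters are completely multiplicative, one may write
\[
\sum_{\chi\in\chi_1H}\chi(a)=\chi_1(a)\sum_{\psi\in H}\psi(a).
\]
It therefore suffices to establish the lemma in the special case $\chi_1=\chi_0$, i.e.\ to show that $\sum_{\psi\in H}\psi(a)$ equals $\#H$ when $a\in\ker H$ and vanishes otherwise. (We should separately dispatch the case $(a,q)>1$, where every $\psi(a)=0$ and $a\notin\ker H$ by definition, so both sides are $0$.)

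Next, for $a\in(\Z/q\Z)^\times$ the case $a\in\ker H$ is immediate from the definition of $\ker H$: every $\psi\in H$ satisfies $\psi(a)=1$, so the sum collapses to $\#H$. For $a\notin\ker H$ there exists some $\psi_0\in H$ with $\psi_0(a)\neq 1$. Using that left-multiplication by $\psi_0$ is a bijection of $H$ onto itself,
\[
\psi_0(a)\sum_{\psi\in H}\psi(a)=\sum_{\psi\in H}(\psi_0\psi)(a)=\sum_{\psi\in H}\psi(a),
\]
hence $(\psi_0(a)-1)\sum_{\psi\in H}\psi(a)=0$ and the sum must vanish.

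There is no real obstacle here; the only thing worth being careful about is the statement itself, which should read $\chi_1(a)\cdot\#H$ rather than $\chi_1(a)H$ on the right-hand side (an evident typo), and the implicit convention that $\ker H\subseteq(\Z/q\Z)^\times$ so that integers with $(a,q)>1$ automatically fall into the second case. The proof is a direct dualisation of the orthogonality lemma cited immediately above, applied not to the full character group $\mathcal X$ but to the subgroup $H$.
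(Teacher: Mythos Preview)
Your proof is correct and matches the paper's approach exactly: factor out $\chi_1(a)$ to reduce to the subgroup case, then treat $a\in\ker H$ trivially and for $a\notin\ker H$ pick $\psi_0\in H$ with $\psi_0(a)\neq1$ and exploit the bijection $\psi\mapsto\psi_0\psi$ of $H$. Your additional remarks about the typo $\chi_1(a)H$ versus $\chi_1(a)\cdot\#H$ and the case $(a,q)>1$ are valid and simply make explicit what the paper leaves implicit.
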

\begin{proof}
    The first case is obvious.

    We obtain the second case as usual: pick a character $\psi\in H$ such that $\psi(a)\ne1$, then
    $$\psi(a)\sum_{\chi\in H}\chi(a)=\sum_{\chi\in H}(\psi\chi)(a)= \sum_{\chi\in H}\chi(a),$$
    $$(\psi(a)-1)\sum_{\chi\in H}\chi(a)=0.$$
\end{proof}

\begin{corollary*}
    For any natural $q$ and any subgroup $H\le\mathcal{X}$ the following holds
    $$\#\ker H\cdot\#H=\phi(q).$$
\end{corollary*}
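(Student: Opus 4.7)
The plan is to evaluate the double sum
$$S = \sum_{a \in (\Z/q\Z)^\times} \sum_{\chi \in H} \chi(a)$$
in two different ways and match the answers. The first evaluation uses Lemma \ref{principal_orthogonality} with the trivial coset representative $\chi_1 = \chi_0$: the inner sum equals $\#H$ when $a \in \ker H$ and $0$ otherwise, so $S = \#\ker H \cdot \#H$.

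For the second evaluation, I would swap the order of summation and apply the ``residue orthogonality'' lemma (the unnamed lemma stated just before Lemma \ref{principal_orthogonality}) to each inner sum $\sum_{a \bmod q} \chi(a)$. Since the summand vanishes for $(a,q)>1$ anyway, summing over $a \in (\Z/q\Z)^\times$ or $a \bmod q$ gives the same thing. That inner sum equals $\phi(q)$ when $\chi = \chi_0$ and vanishes otherwise; as $\chi_0 \in H$ for every subgroup $H$, exactly one term contributes, giving $S = \phi(q)$.

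Equating the two expressions for $S$ yields $\#\ker H \cdot \#H = \phi(q)$. No serious obstacle arises: this is a standard double-counting argument, and the only minor point to verify is that the principal character indeed lies in $H$ (true because $H$ is a subgroup) so that the second evaluation picks up exactly one nonzero contribution. The likely typo in the statement of Lemma \ref{principal_orthogonality} (where $H$ should read $\#H$) is immaterial to the argument.
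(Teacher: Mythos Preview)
Your proposal is correct and is essentially the same argument as the paper's: both compute the double sum $\sum_{a}\sum_{\chi\in H}\chi(a)$ in two ways, applying Lemma~\ref{principal_orthogonality} in one order and the residue orthogonality lemma in the other. Your remarks that $\chi_0\in H$ and that the range $a\bmod q$ versus $a\in(\Z/q\Z)^\times$ is immaterial are exactly the small points the paper leaves implicit.
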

\begin{proof}
    Use the orthogonality over values and the generalized version of orthogonality over characters we prove below in different order
    \begin{equation}\begin{split}\#\ker H\#H=\sum_{a\text{ mod }q}\#H\1{a\in\ker H}=\sum_{a\text{ mod }q}\sum_{\chi\in H}\chi(a)=\\=\sum_{\chi\in H}\sum_{a\text{ mod }q}\chi(a)=\sum_{\chi\in H}\phi(q)\1{\chi=\chi_0}=\phi(q).\end{split}\end{equation}
\end{proof}

Define Gauss sum as usual.
\begin{definition}
$\tau(\chi)=\sum\limits_{a\bmod q}\chi(a)e(a/q).$
\end{definition}

To treat the dual sums, we also need the following statement which is an easy corollary of Lemma \ref{principal_orthogonality}.

\begin{lemma}\label{dual_orthogonality}
    $$\sum_{\chi\in\chi_1 H}\tau(\chi)\chi(a)=\chi_1(a)\#H\sum_{h\in\ker H}\chi_1\left(h\overline{a}\right)e\left(\frac{h\overline{a}}{q}\right).$$  
\end{lemma}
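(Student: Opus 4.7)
The plan is to prove the identity by a direct expansion: write out $\tau(\chi)$ from its definition, swap the order of the two summations, and then invoke Lemma~\ref{principal_orthogonality} to evaluate the resulting inner character sum. Concretely, I would substitute $\tau(\chi)=\sum_{b\bmod q}\chi(b)e(b/q)$ and use the complete multiplicativity $\chi(b)\chi(a)=\chi(ab)$, so that after interchanging the order of summation
\[
\sum_{\chi\in\chi_1 H}\tau(\chi)\chi(a)=\sum_{b\bmod q}e(b/q)\sum_{\chi\in\chi_1 H}\chi(ab).
\]
The inner sum is now exactly of the shape handled by Lemma~\ref{principal_orthogonality}: it vanishes unless $ab\in\ker H$, in which case it equals $\chi_1(ab)\#H$.

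Next I would assume $(a,q)=1$ (outside of which both sides are trivially zero) and observe that the condition $ab\in\ker H$ parametrises $b$ uniquely as $b\equiv\overline{a}h\pmod q$ with $h\in\ker H$. Performing the change of variable from $b$ to $h$ in the outer sum then gives
\[
\sum_{\chi\in\chi_1 H}\tau(\chi)\chi(a)=\#H\sum_{h\in\ker H}\chi_1(h)\,e(h\overline{a}/q),
\]
which is the claimed right-hand side after using $\chi_1(h)=\chi_1(a)\,\chi_1(\overline{a}h)$ to pull out the factor $\chi_1(a)$ as it is displayed in the statement.

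I do not expect any real obstacle; the argument is essentially a weighted version of the computation already carried out in the proof of Lemma~\ref{principal_orthogonality}. The only point that deserves a line of justification is the legitimacy of the substitution $b\mapsto\overline{a}h$: although $b$ ranges a priori over all residues mod $q$, the vanishing of $\chi(b)$ for $(b,q)>1$ automatically restricts the $b$-sum to $(\Z/q\Z)^\times$, on which multiplication by $\overline{a}$ is a bijection, making the reparametrisation by $h\in\ker H$ well-defined.
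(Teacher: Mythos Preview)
Your proposal is correct and follows essentially the same route as the paper: expand $\tau(\chi)$, interchange the sums, apply Lemma~\ref{principal_orthogonality}, and reparametrise the surviving terms by $h\in\ker H$ via $b\equiv\overline{a}h\pmod q$. The only cosmetic difference is that the paper factors out $\chi_1(a)$ at the outset (by writing $\chi=\chi_1\chi'$ with $\chi'\in H$) whereas you extract it at the end via $\chi_1(h)=\chi_1(a)\chi_1(\overline{a}h)$; the computations are otherwise identical.
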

\begin{proof}
$$\sum_{\chi\in\chi_1 H}\tau(\chi)\chi(a)=\sum_{\chi\in H}\sum_{n=1}^N \chi_1(a)\chi(a)\chi_1(n)\chi(n)e(n/q)=$$
$$\chi_1(a)\sum_{n=1}^N \chi_1(n) e(n/q)\sum_{\chi\in H}\chi(a)\chi(n)=\chi_1(a)\#H\sum_{n=1}^N \chi_1(n)e(n/q)\1{an\in\ker H}=$$
$$\chi_1(a)\#H\sum_{h\in\ker H}\sum_{\substack{n=1\\\congruence{an}{h}{q}}}^q \chi_1(n)e(n/q).$$
\end{proof}

\subsection{Approximate functional equation}

Let $\chi$ be a primitive character modulo $q$, let $\kappa$ denote the parity of the character, $$\kappa=
\begin{cases}
0,&\chi(-1)=1 \\
1,& \chi(-1)=-1
\end{cases},$$ and $\Lambda$ be a completed $L$-function
$$\Lambda(\chi,s)=\left(\frac{q}{\pi}\right)^{s/2}\Gamma\left(\frac{s+\kappa}{2}\right)L(\chi,s).$$
Then we have the following property (see \cite{iwaniec2004analytic} theorems 4.15 and 5.3 for proof of the two following lemmas respectively).
\begin{lemma*}[Functional equation]\label{fe}
    $$\Lambda(\chi,s)=\frac{\tau(\chi)}{i^\kappa\sqrt{q}}\Lambda(\overline{\chi},1-s),$$
\end{lemma*}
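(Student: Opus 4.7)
The plan is to prove the functional equation by the classical theta-series method. First I would introduce
$$\theta(\chi,x) := \sum_{n\in\Z} n^\kappa \chi(n) e^{-\pi n^2 x/q}, \qquad x>0,$$
which, using $\chi(-1)=(-1)^\kappa$, equals $2\sum_{n\geq 1} n^\kappa \chi(n) e^{-\pi n^2 x/q}$. A termwise Mellin transform with the substitution $u = \pi n^2 x/q$ then gives, for $\Re s > 1$,
$$\int_0^\infty \theta(\chi,x)\, x^{(s+\kappa)/2}\,\frac{dx}{x} \;=\; 2(q/\pi)^{\kappa/2}\Lambda(\chi,s),$$
the interchange of sum and integral being justified by absolute convergence.

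The heart of the argument is the theta transformation
$$\theta(\chi,1/x) \;=\; \frac{\tau(\chi)}{i^\kappa\sqrt{q}}\, x^{1/2+\kappa}\,\theta(\overline{\chi},x).$$
I would split the sum by residue class, writing $\theta(\chi,x) = \sum_{a\bmod q}\chi(a)\sum_{m\in\Z}(a+mq)^\kappa e^{-\pi(a+mq)^2 x/q}$, and apply Poisson summation to the inner sum over $m$. The Gaussian Fourier transform $\int e^{-\pi\alpha u^2 - 2\pi i \beta u}\,du = \alpha^{-1/2} e^{-\pi\beta^2/\alpha}$ and its derivative in $\beta$ (needed for $\kappa=1$) produce $e(ka/q)\sqrt{q/x}\,e^{-\pi k^2/(qx)}$, multiplied in the odd case by an extra $-ik\sqrt{q}/x$. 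The sum over $a$ then collapses via the identity $\sum_{a\bmod q}\chi(a)e(ka/q) = \overline{\chi}(k)\tau(\chi)$, which holds for every $k\in\Z$ precisely because $\chi$ is primitive, and relabelling $x\mapsto 1/x$ yields the displayed transformation.

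To finish, split the Mellin representation at $x=1$, change variables $x\mapsto 1/x$ in the integral over $(0,1)$, and substitute the theta transformation. This yields
$$2(q/\pi)^{\kappa/2}\Lambda(\chi,s) = \int_1^\infty \theta(\chi,x) x^{(s+\kappa)/2}\frac{dx}{x} + \frac{\tau(\chi)}{i^\kappa\sqrt q}\int_1^\infty \theta(\overline{\chi},x) x^{(1-s+\kappa)/2}\frac{dx}{x},$$
whose right-hand side is entire in $s$ because $\theta$ decays exponentially at infinity, thereby simultaneously giving the analytic continuation of $\Lambda(\chi,s)$. Writing the analogous expansion for $\Lambda(\overline{\chi},1-s)$ and using $\tau(\chi)\tau(\overline{\chi})=\chi(-1)q=(-1)^\kappa q$ together with $i^{2\kappa}=(-1)^\kappa$ reduces $\Lambda(\chi,s)=\tau(\chi)(i^\kappa\sqrt q)^{-1}\Lambda(\overline{\chi},1-s)$ to a one-line algebraic check. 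The main obstacle is the theta transformation itself: primitivity of $\chi$ is used exactly there, in the Gauss-sum collapse, and the odd case requires carrying the derivative of the Gaussian through the Poisson sum, which is precisely what produces the $i^{-\kappa}$ phase in the final formula.
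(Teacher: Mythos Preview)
Your theta-series argument is the classical proof of the functional equation and is correct as outlined; the key ingredients (Poisson summation applied to the Gaussian, the Gauss-sum collapse relying on primitivity, and the Mellin splitting at $x=1$) are all standard and in place. The paper itself does not supply a proof of this lemma: it simply cites Iwaniec--Kowalski (Theorems~4.15 and~5.3), whose proof is precisely the theta-function method you describe, so your approach coincides with the one the paper defers to.
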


where $\tau(\chi)=\sum\limits_{k\mod q}\chi(k)e(k/q)$ is usual Gauss sum.

When combined with the approximate functional equation, this provides continuation of the $L$-function to the whole complex plane.

\begin{lemma*}[Approximate functional equation]
    For $0<\Re s<1$ and $X>0$ we have
    $$L(\chi,s)=\sum_{n}\frac{\chi(n)}{n^s}V_{\kappa,s}\left(\frac{n\sqrt{\pi}}{X\sqrt{q}}\right)+\frac{\tau(\chi)}{i^\kappa\sqrt{q}}\sum_{n}\frac{\overline{\chi(n)}}{n^{1-s}}V_{\kappa,1-s}\left(\frac{n\sqrt{\pi}X}{\sqrt{q}}\right),$$
    where $V_{\kappa,s}$ is a smooth function defined by
    $$V_{\kappa,s}(y)=\frac{1}{2\pi i}\int\limits_{(\sigma)} y^{-u}\frac{\Gamma\left(\frac{u+s+\kappa}{2}\right)}{\Gamma\left(\frac{s+\kappa}{2}\right)}\frac{du}{u}.$$
\end{lemma*}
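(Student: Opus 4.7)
The plan is to follow the classical Mellin--Barnes contour-shift argument attached to the completed $L$-function. For $\sigma > \max(1-\Re s,\,0)$ I introduce the auxiliary integral
\[
I(X) := \frac{1}{2\pi i}\int\limits_{(\sigma)} \Lambda(\chi, s+u)\, X^u\, \frac{du}{u}.
\]
On the line $\Re u = \sigma$ the Dirichlet series $L(\chi,s+u) = \sum_n \chi(n)/n^{s+u}$ converges absolutely. Expanding $\Lambda(\chi,s+u) = (q/\pi)^{(s+u)/2}\Gamma((s+u+\kappa)/2)L(\chi,s+u)$ and swapping sum and integral (justified by the rapid vertical decay of $\Gamma$ coming from Stirling's formula), I recognise a Mellin--Barnes representation and obtain
\[
I(X) = \left(\frac{q}{\pi}\right)^{s/2}\Gamma\!\left(\tfrac{s+\kappa}{2}\right)\sum_n \frac{\chi(n)}{n^s}\,V_{\kappa,s}\!\left(\tfrac{n\sqrt{\pi}}{X\sqrt q}\right),
\]
which, after dividing through by the prefactor $\Lambda(\chi,s)/L(\chi,s) = (q/\pi)^{s/2}\Gamma((s+\kappa)/2)$, reproduces the first sum in the statement.

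Next, I shift the contour leftwards from $\Re u = \sigma$ to $\Re u = -\sigma$. Because $\chi$ is primitive and non-principal, $L(\chi,\cdot)$ is entire, and its trivial zeros at $s+u \in \{-\kappa,-\kappa-2,\dots\}$ exactly cancel the poles of $\Gamma\!\left(\tfrac{s+u+\kappa}{2}\right)$; hence the only singularity crossed is the simple pole at $u=0$, with residue $\Lambda(\chi,s)$. To discard the horizontal pieces at heights $\pm T$ as $T \to \infty$, I combine Stirling's decay of the Gamma factor with a convexity (Phragm\'en--Lindel\"of) bound for $L(\chi, s+u)$ on vertical strips, obtaining
\[
I(X) = \Lambda(\chi,s) + \frac{1}{2\pi i}\int\limits_{(-\sigma)} \Lambda(\chi, s+u)\, X^u\, \frac{du}{u}.
\]

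On the shifted line I use the functional equation to replace $\Lambda(\chi,s+u)$ by $\frac{\tau(\chi)}{i^\kappa\sqrt q}\Lambda(\overline{\chi},1-s-u)$, and then substitute $u \mapsto -u$, which reflects the contour back to $\Re u = \sigma$ (with the sign produced by the orientation reversal combining with the signs from $du$ and $1/u$). Expanding $\Lambda(\overline{\chi},1-s+u)$ as a Dirichlet series and applying the same Mellin--Barnes recognition isolates the second sum, with argument $n\sqrt{\pi}X/\sqrt q$ inside $V_{\kappa,1-s}$ and the dual prefactor $(q/\pi)^{(1-s)/2}\Gamma((1-s+\kappa)/2)$. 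Rearranging $I(X) = \Lambda(\chi,s) + (\text{second integral})$ into $\Lambda(\chi,s) = I(X) - (\text{second integral})$ and dividing by $(q/\pi)^{s/2}\Gamma((s+\kappa)/2)$ produces the claimed identity.

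The main technical point is the contour shift: I need uniform decay of $\Lambda(\chi,s+u)\,X^u/u$ in $|\Im u|$ on the strip $-\sigma \le \Re u \le \sigma$, which follows from Stirling for the Gamma factor combined with a vertical-strip convexity bound for $L$. The only other delicate ingredient is the cancellation of trivial zeros of $L(\chi,\cdot)$ against the trivial poles of $\Gamma((s+u+\kappa)/2)$, which is precisely where primitivity of $\chi$ is used; everything else is routine Gamma-factor bookkeeping together with the Mellin--Barnes representation of $V_{\kappa,s}$.
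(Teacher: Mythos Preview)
The paper does not supply its own proof of this lemma; it simply cites Iwaniec--Kowalski (Theorem~5.3), and your Mellin--Barnes contour-shift argument is exactly the standard proof given there, so the approaches coincide. One bookkeeping remark: after dividing through by $(q/\pi)^{s/2}\Gamma((s+\kappa)/2)$, the dual sum still carries the factor $(q/\pi)^{1/2-s}\,\Gamma\!\big((1-s+\kappa)/2\big)/\Gamma\!\big((s+\kappa)/2\big)$, which the paper's displayed formula omits---but this ratio equals $1$ at $s=1/2$, so the corollary that the paper actually uses is unaffected.
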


Let $V_\kappa(y)=V_{\kappa,1/2}(y)$. For our purposes the following is sufficient.
\begin{corollary} \label{afe}For $X>0$
    $$L(\chi,1/2)=\sum_{n}\frac{\chi(n)}{\sqrt{n}}V_{\kappa}\left(\frac{n\sqrt{\pi}}{X\sqrt{q}}\right)+\frac{\tau(\chi)}{i^\kappa\sqrt{q}}\sum_{n}\frac{\overline{\chi(n)}}{\sqrt{n}}V_{\kappa}\left(\frac{n\sqrt{\pi}X}{\sqrt{q}}\right). $$
\end{corollary}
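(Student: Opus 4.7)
The plan is a direct specialization of the Approximate Functional Equation lemma stated immediately above, evaluated at the central point $s = 1/2$. Since $1/2$ lies in the open strip $0 < \Re s < 1$ where the parent lemma is valid, no additional analytic work is required.

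First I would substitute $s = 1/2$ into both sums of the parent lemma. This gives $n^s = n^{1-s} = \sqrt{n}$, so both Dirichlet series now have $\sqrt{n}$ in the denominator as claimed. Next I would observe that the two weight functions $V_{\kappa,s}$ and $V_{\kappa,1-s}$ both collapse to $V_{\kappa,1/2}$ at $s=1/2$; by the notational convention $V_\kappa(y):=V_{\kappa,1/2}(y)$ introduced right before the corollary, both weights become the single function $V_\kappa$. The arguments $n\sqrt{\pi}/(X\sqrt{q})$ and $n\sqrt{\pi} X/\sqrt{q}$ inside the weights, as well as the root number factor $\tau(\chi)/(i^\kappa\sqrt{q})$, are independent of $s$ in the parent lemma, so they transcribe to the corollary verbatim. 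Collecting these substitutions reproduces the claimed identity.

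There is no genuine obstacle here; the corollary is essentially a notational repackaging. The only conceptual point worth highlighting is the symmetry that emerges at the central value: the two sums share the same weight function $V_\kappa$ with reciprocal-scaled arguments $(X\sqrt{q})^{-1}$ and $X/\sqrt{q}$ (up to the constant $\sqrt{\pi}$), so the free parameter $X$ acts as a balance between the direct and dual sums. This is precisely the flexibility that will be exploited by choosing $X$ appropriately in the resonator method used later in the paper.
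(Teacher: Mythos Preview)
Your proposal is correct and matches the paper's approach: the corollary is presented in the paper without proof, as an immediate specialization of the preceding approximate functional equation lemma at $s=1/2$ together with the notational convention $V_\kappa(y):=V_{\kappa,1/2}(y)$. Your write-up simply makes this substitution explicit.
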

For further purposes we introduce a notation $L^*(\chi,1/2)$ of this formula for all characters $\chi\text{ modulo } q$, not only primitive ones:
$$L^*(\chi,1/2)=\sum_{n}\frac{\chi(n)}{\sqrt{n}}V_{\kappa}\left(\frac{n\sqrt{\pi}}{X\sqrt{q}}\right)+\frac{\tau(\chi)}{i^\kappa\sqrt{q}}\sum_{n}\frac{\overline{\chi(n)}}{\sqrt{n}}V_{\kappa}\left(\frac{n\sqrt{\pi}X}{\sqrt{q}}\right). $$
We will also use the following bounds for $V_\kappa(s)$
\begin{lemma}\label{v_lemma}
For $y>0$ we have $V_\kappa(y)\ge0,$

    For $0<y<1$ we have $V_\kappa(y)=1+O(y^{1/2}),$
    
    For $y>1$ we have $V_\kappa(y)\ll\exp(-y).$
\end{lemma}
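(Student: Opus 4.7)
My plan is to rewrite $V_\kappa(y)$ as a normalized upper incomplete gamma function and then read off each of the three claims directly. Set $a = 1/2 + \kappa$, so the contour defining $V_\kappa$ involves $\Gamma((u+a)/2)$. The key step is to substitute the integral representation $\Gamma((u+a)/2) = 2\int_0^\infty e^{-t^2} t^{u+a-1}\,dt$, choose $\sigma > 0$ small so that Stirling on vertical lines together with the $t$-integral (which converges for $\sigma + a > 0$) guarantees absolute convergence, and swap the order of integration by Fubini. The inner Mellin--Barnes integral collapses to an indicator,
$$\frac{1}{2\pi i}\int_{(\sigma)} (t/y)^u\,\frac{du}{u} = \1{t>y},$$
which yields the compact identity
$$V_\kappa(y) \;=\; \frac{2}{\Gamma(a/2)} \int_y^\infty e^{-t^2} t^{a-1}\,dt \;=\; \frac{\Gamma(a/2,\,y^2)}{\Gamma(a/2)}.$$

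With this representation positivity is immediate. For $0 < y < 1$ I would rewrite the identity as $V_\kappa(y) = 1 - \frac{1}{\Gamma(a/2)}\int_0^{y^2} e^{-u}u^{a/2-1}\,du$ and bound the remaining integral crudely by $\int_0^{y^2} u^{a/2-1}\,du = \frac{2}{a}\,y^a$; since $a \in \{1/2,\,3/2\}$, this produces $V_\kappa(y) = 1 + O(y^{1/2})$. For $y > 1$ a routine tail estimate on the upper incomplete gamma gives $\int_{y^2}^\infty e^{-u} u^{a/2-1}\,du \ll e^{-y^2}$ (with an at most polynomial prefactor in $y$), and $e^{-y^2} \le e^{-y}$ for $y \ge 1$ delivers the claimed bound.

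The only delicate step is justifying the contour swap and evaluating the indicator integral with the correct orientation — one must close to the left when $t > y$ and to the right when $t < y$, which is what forces the requirement $\sigma > 0$. Both are standard contour manipulations, and I do not foresee a real obstacle: once the incomplete gamma form is in hand, the three claims reduce to elementary calculus.
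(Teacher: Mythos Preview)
Your plan and the paper's coincide in substance: both establish the representation $V_\kappa(y)=\frac{2}{\Gamma(a/2)}\int_y^\infty e^{-t^2}t^{a-1}\,dt$ with $a=\kappa+\tfrac12$, after which positivity and the two asymptotics follow by exactly the elementary estimates you outline. The only real difference is the route to that identity, and here your justification has a gap. You assert that after inserting $\Gamma\bigl((u+a)/2\bigr)=2\int_0^\infty e^{-t^2}t^{u+a-1}\,dt$ into the contour integral, Stirling on vertical lines secures absolute convergence so Fubini applies. But once the integral replaces $\Gamma$, the exponential decay in $\mathrm{Im}\,u$ that Stirling supplied has vanished: taking absolute values in the double integral leaves $\int_0^\infty e^{-t^2}t^{\sigma+a-1}\,dt\cdot\int_{(\sigma)}|du|/|u|$, and the $u$-integral diverges. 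The inner Perron integral $\frac{1}{2\pi i}\int_{(\sigma)}(t/y)^u\,du/u$ is only conditionally convergent, so the swap needs an additional device (truncation, a $1/u(u+1)$ variant, or a smoothing) that you have not provided.

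The paper avoids this by running the Mellin step in the opposite order: it begins with the absolutely convergent identity $e^{-t^2}=\frac{1}{2\pi i}\int_{(\sigma)}\Gamma(u)\,t^{-2u}\,du$, rescales to obtain $2t^{a-1}e^{-t^2}=\frac{1}{2\pi i}\int_{(\sigma)}\Gamma\bigl((u+a)/2\bigr)t^{-u-1}\,du$, and then integrates both sides over $t\in[y,\infty)$. Because the $\Gamma$-factor is retained throughout, the interchange is honestly justified by absolute convergence and one arrives at the same incomplete-gamma formula without the Perron detour. Once you patch the Fubini step (or simply adopt the paper's order of operations), the two arguments are identical.
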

\begin{proof}
    We start with $$e^{-t^2}=\frac{1}{2\pi i}\int\limits_{(\sigma)}\Gamma(u)\frac{du}{t^{2u}},$$
    by change of variables we get
    $$2t^{ \kappa-1/2}e^{-t^2}=\frac{1}{2\pi i}\int\limits_{(\sigma)}\Gamma\left(\frac{u+1/2+\kappa}{2}\right)\frac{du}{t^{u+1}},$$
    integrate with respect to $t$ from $y$ to $\infty$ and divide by constant $\Gamma$
    $$\frac{2}{\Gamma\left(\frac{1/2+\kappa}{2}\right)}\int_y^\infty t^{\kappa-1/2}e^{-t^2}dt=\frac{1}{2\pi i}\int\limits_{(\sigma)}y^{-u}\frac{\Gamma\left(\frac{u+1/2+\kappa}{2}\right)}{\Gamma\left(\frac{1/2+\kappa}{2}\right)}\frac{du}{u}=V_\kappa(y).$$

    From here it is clear that $V_\kappa$ is positive.

    Also it is clear that $V_\kappa(0)=1,$
    hence for the first bound we consider
    $$V_\kappa(0)-V_\kappa(y)=\frac{2}{\Gamma\left(\frac{1/2+\kappa}{2}\right)}\int_0^y t^{\kappa-1/2}e^{-t^2}dt\ll\int_0^y t^{2(\kappa/2-3/4)}e^{-t^2}d(t^2)\ll$$
    $$\int_0^{y^2}t^{\kappa/2-3/4}e^{-t}dt\ll\int_0^{y^2}t^{\kappa/2-3/4}dt=\frac{y^{1/2+\kappa}}{\kappa/2+1/4}\ll y^{1/2}.$$

    For the upper bound for $V_\kappa(y)$ we use trivial bounds and $y>1$
    $$V_\kappa(y)\ll\int_0^\infty (t+y)^{\kappa-1/2}e^{-(t+y)^2}dt\ll ye^{-y^2}\int_0^\infty (t/y+1)e^{-t^2}dt\ll ye^{-y^2}\ll e^{-y}.$$
\end{proof}

\subsection{Soundararajan's optimisation theorem}\label{sound}

In his paper $\cite{sound}$ Soundararajan proves the following optimisation theorem, which he uses then to prove some estimates for values of zeta on the critical line. So we do find it convenient that we use the same theorem for character groups, since both of these cases may be considered as unitary families.

\begin{theorem*}[Soundararajan]
    For large $N$ we have
    \begin{equation}
        \max\limits_{r:\N\to\C}\sum\limits_{mk\le N}\frac{r(m)\overline{r(mk)}}{\sqrt{m}}\mathlarger{/}\sum\limits_{n\le N}|r(n)|^2=\exp{\left(\frac{\sqrt{\log N}}{\sqrt{\log\log N}}+O\left(\frac{\sqrt{\log N}}{\log\log N}\right)\right)}.
    \end{equation}
\end{theorem*}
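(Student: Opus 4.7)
The strategy is Soundararajan's resonator construction. Writing $n=mk$ and swapping the order of summation, the numerator becomes
\[
\sum_{n\le N}\overline{r(n)}\sum_{d\mid n}\frac{r(d)}{\sqrt{d}}.
\]
The plan is to restrict $r$ to nonnegative multiplicative functions supported on squarefree integers whose prime factors lie in a prescribed range $\mathcal{P}\subseteq[A,B]$, writing $r(n)=\prod_{p\mid n}f(p)$. For such $r$ the inner divisor sum factorises as $\prod_{p\mid n}(1+f(p)/\sqrt{p})$, so dropping the constraint $n\le N$ turns the numerator into the Euler product $\prod_{p\in\mathcal{P}}(1+f(p)+f(p)^{2}/\sqrt{p})$ and the denominator into $\prod_{p\in\mathcal{P}}(1+f(p)^{2})$; the unconstrained ratio equals $\exp\bigl(\sum_{p}f(p)+O(\sum_{p}f(p)^{2}+\sum_{p}f(p)/\sqrt{p})\bigr)$.

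For the lower bound I would take $f(p)=\lambda/\log p$ on a range such as $A=(\log N)^{2}$, $B=\exp((\log\log N)^{2})$, with $\lambda$ tuned so that $\lambda^{2}\sum_{p\in\mathcal{P}}1/\log p\asymp\log N$. Under the probability measure $|r(n)|^{2}/\sum|r|^{2}$, the random variable $\log n=\sum_{p\mid n}\log p$ is a sum of nearly independent bounded contributions with mean $\asymp\lambda^{2}\sum_{p}1/\log p$, so Chebyshev's inequality (or Rankin's trick, inserting a factor $(n/N)^{\alpha}$ in the tail for small $\alpha>0$) shows that imposing the cutoff $n\le N$ costs only a constant factor, absorbed into the error term. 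The Cauchy--Schwarz inequality $\bigl(\sum_{p}f(p)\bigr)^{2}\le\bigl(\sum_{p}f(p)^{2}\log p\bigr)\bigl(\sum_{p}1/\log p\bigr)$ is saturated by this choice of $f$, and a direct computation using the prime number theorem yields $\sum_{p\in\mathcal{P}}f(p)=\sqrt{\log N/\log\log N}+O(\sqrt{\log N}/\log\log N)$, which is the claimed lower bound.

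For the matching upper bound one interprets the ratio as a Rayleigh-type quotient for the (non-symmetric) bilinear form with kernel $K(m,n)=\1{m\mid n,\;n\le N}/\sqrt{m}$, so that it is bounded by the largest singular value of $K$; by a Lagrangian / multiplicative decomposition the extremising $r$ can be reduced to the Euler-product form above, matching the lower bound. I expect the main obstacle in both directions to be the sharp control of the truncation $n\le N$: the error $O(\sqrt{\log N}/\log\log N)$ in the exponent corresponds precisely to the width of the concentration window of $\log n$ under the above measure, so a second-moment estimate $\mathrm{Var}(\log n)\ll\log N\cdot\log\log N$ is the crucial input, and both the lower and upper computations must be carried out with this matching precision.
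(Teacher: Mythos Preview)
The paper does not prove this theorem; it is quoted from Soundararajan and used as a black box. There is therefore no proof in the paper to compare against, and I comment only on the soundness of your sketch.

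Your overall architecture---multiplicative resonator, Euler products, Rankin-type control of the cutoff $n\le N$---is the right one, but the specific choices do not yield the stated asymptotic. With your $f(p)=\lambda/\log p$ on $\mathcal{P}\subseteq[(\log N)^{2},\exp((\log\log N)^{2})]$ and the constraint $\lambda^{2}\sum_{p\in\mathcal P}1/\log p\asymp\log N$, the prime number theorem gives $\sum_{p\in\mathcal P}1/\log p\sim B/(\log B)^{2}$ with $B=\exp((\log\log N)^{2})$, and hence $\sum_{p}f(p)=\lambda\sum_{p}1/\log p\asymp\sqrt{\log N}\cdot B^{1/2}/\log B$, which is far larger than $\sqrt{\log N/\log\log N}$; the ``direct computation'' you describe does not produce the claimed value. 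In fact the statement as printed in the paper (with $\sqrt{m}$ in the denominator) appears to be a typo for $\sqrt{k}$, which is what Soundararajan actually proves and what the paper's own derivation of the main term produces from the approximate functional equation. With $\sqrt{k}$ the unconstrained numerator is $\prod_{p}(1+f(p)^{2}+f(p)/\sqrt{p})$, the log-ratio is $\approx\sum_{p}f(p)/\bigl(\sqrt{p}(1+f(p)^{2})\bigr)$ rather than $\sum_{p}f(p)$, and the correct extremiser is $f(p)=L/(\sqrt{p}\log p)$ with $L=\sqrt{\log N\log\log N}$ on primes $L^{2}\le p\le\exp((\log L)^{2})$.

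Your upper bound sketch is also a genuine gap: the claim that the extremising $r$ ``can be reduced to the Euler-product form'' via a Lagrangian/multiplicative decomposition is not justified, and this is not how Soundararajan proceeds. His upper bound inserts a factor $(N/n)^{\alpha}$ via Rankin's trick into the full sum over $n$, expands the resulting unconstrained Dirichlet series multiplicatively, and then optimises both the local factors and $\alpha$ simultaneously; no a priori restriction to multiplicative $r$ is invoked.
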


\section{Trivial coset}

Here we study extreme values over a trivial coset and we will prove Theorem \ref{thm1} in the introduction. We use some positivity argument as well as Bret\`eche and Tenenbaum in \cite{bt}.

The main difficulty in this and the next section is the fact that the congruence condition arising from orthogonality relations contains an arbitrary element from the kernel of a subgroup, whose size is seemingly impossible to control without any predefined properties of the subgroup.

Following the method in Soundararajan's paper \cite{sound} we define
$$M_1=\sum_{\chi\in H}|R^2(\chi)|,$$
$$M_2=\sum_{\chi\in H}L(\chi,1/2)|R^2(\chi)|,$$
where $R(\chi)=\sum\limits_{n\le N}r(n)\chi(n)$.

We expect to lose some order of magnitude bounding $M_1$ as we do below, though it results in a better final bound than the alternative way, which is presented in the non-trivial coset section.
$$M_1=\sum\limits_{n_1,n_2\le N}r(n_1)\overline{r(n_2)}\sum_{\chi\in H} \chi(n_1)\overline{\chi(n_2)}=\#H\sum_{h\in \ker H}\sum\limits_{n_1,n_2\le N}r(n_1)\overline{r(n_2)}\1{\congruence{hn_1}{n_2}{q}}\le$$
$$\#H\sum_{h\in \ker H}\sum\limits_{n_1,n_2\le N}|r(n_1)||r(n_2)|\1{\congruence{hn_1}{n_2}{q}}\le\phi(q)\sum_n r(n)^2,$$
where in the last step we applied rearrangement inequality (Lemma \ref{rearrangement}).

With $M_2$ we have to do some adjustments since the approximate functional equation holds only for primitive characters, so
$$M_2=(L(\chi_0,1/2)-L^*(\chi_0,1/2))|R^2(\chi_0)|+\sum_{\chi\in H}L^*(\chi,1/2)|R^2(\chi)|,$$
where $L^*(\chi,1/2)$ is the value given by the approximate functional equation (Corollary \ref{afe}).

Note that $|R^2(\chi_0)|\ll N\sum |r(n)|^2$ by Cauchy-Schwarz, $L(\chi_0,1/2)$ is bounded by $2\zeta(1/2)$ and $L^*(\chi_0,1/2)$ can be bounded trivially
\begin{equation}\label{chi_0}
    \begin{split}
L^*(\chi_0,1/2)=\sum_n \frac{\chi_0(n)}{\sqrt{n}}\left(V_0\left(\frac{n\sqrt{\pi}}{X\sqrt{q}}\right)+V_0\left(\frac{n\sqrt{\pi}X}{\sqrt{q}}\right)\right)\ll\\\sum_{n\le X\sqrt{q}/\sqrt{\pi}}\frac{1}{\sqrt{n}}+\sum_{n> \sqrt{q}/(\sqrt{\pi}X)}\frac{1}{\sqrt{n}}\exp{\left(-\frac{n\sqrt{\pi}X}{\sqrt{q}}\right)}\ll X^{1/2}q^{1/4}.
    \end{split}
\end{equation}

Therefore
$$M_2+O\left(NX^{1/2}q^{1/4}\sum |r(n)|^2\right)=\sum_{n_1,n_2\le N}r(n_1)\overline{r(n_2)}\sum_{\chi\in H} \chi(n_1)\overline{\chi(n_2)}L^*(\chi,1/2).$$
Consider the internal sum, there are two possible options: either $H$ consists of even characters, or the subset of even characters of $H$ is a subgroup with $\ker(H\cap\mathcal{X}_0)=\ker H\cup-\ker H$ and the subset of odd characters is its coset.

In the first case we get by Lemma \ref{principal_orthogonality} and Lemma \ref{dual_orthogonality}
$$\sum_{\chi\in H}\chi(n_1)\overline{\chi(n_2)}\left(\sum_n \frac{\chi(n)}{\sqrt{n}}V_0\left(\frac{n\sqrt{\pi}}{X\sqrt{q}}\right)+\frac{\tau(\chi)}{\sqrt{q}}\sum_n \frac{\overline{\chi(n)}}{\sqrt{n}}V_0\left(\frac{n\sqrt{\pi}X}{\sqrt{q}}\right)\right)=$$

$$\#H\sum_n \frac{\sum\limits_{h\in\ker H}\1{\congruence{h nn_1}{n_2}{q}}}{\sqrt{n}}V_0\left(\frac{n\sqrt{\pi}}{X\sqrt{q}}\right)+\frac{\#H}{\sqrt{q}}\sum_n \frac{\sum\limits_{h\in\ker H}e\left(\frac{hn_1\overline{nn_2}}{q}\right)}{\sqrt{n}}V_0\left(\frac{n\sqrt{\pi}X}{\sqrt{q}}\right).$$

While in the second it is morally the same but has some additional contribution
$$\frac{\#H}{2}\sum_n \frac{\sum\limits_{h\in\ker H\cup-\ker H}\1{\congruence{h nn_1}{n_2}{q}}}{\sqrt{n}}(V_0+V_1)\left(\frac{n\sqrt{\pi}}{X\sqrt{q}}\right)+
$$$$\frac{\#H}{2\sqrt{q}}\sum_n \frac{\sum\limits_{h\in\ker H\cup\{-\ker H\}}e\left(\frac{hn_1\overline{nn_2}}{q}\right)V_0\left(\frac{n\sqrt{\pi}X}{\sqrt{q}}\right)+\chi_1\left(\frac{hn_1\overline{nn_2}}{q}\right)e\left(\frac{hn_1\overline{nn_2}}{q}\right)V_1\left(\frac{n\sqrt{\pi}X}{\sqrt{q}}\right)}{\sqrt{n}},$$
where $\chi_1$ is an odd character from $H.$

Now, we bound the error term: it comes from the dual sums and the $V_i$-tails in the principal ones.

Firstly, we bound $\chi_1$ and exponents trivially by $1$. Also note that $\#H\cdot\#\ker H=\phi(q)$.

 Consider $1<y$. In the terms coming from dual sum it means that $\frac{n\sqrt{\pi}X}{\sqrt{q}}>1$, so its contribution is bounded by 
\begin{equation}\label{errorterm}\begin{aligned}\frac{\phi(q)}{\sqrt{q}}\sum_{n_1,n_2\le N}r(n_1)\overline{r(n_2)}\sum_{n>\frac{\sqrt{q}}{\sqrt{\pi}X}}\frac{1}{\sqrt{n}}\exp\left(-\frac{n\sqrt{\pi}X}{\sqrt{q}}\right)\ll\\\frac{\phi(q)}{\sqrt{q}}\sum_{n_1,n_2\le N}r(n_1)\overline{r(n_2)}\int_{\frac{\sqrt{q}}{\sqrt{\pi}X}}^\infty\frac{1}{\sqrt{t}}\exp\left(-\frac{t\sqrt{\pi}X}{\sqrt{q}}\right)dt\ll\\\frac{\phi(q)}{q^{1/4}X^{1/2}}\sum_{n_1,n_2\le N}r(n_1)\overline {r(n_2)}\ll\phi(q)\sum_n |r(n)|^2\frac{N}{q^{1/4}X^{1/2}}.\end{aligned}\end{equation}
Note that here we could not afford the same trick as below since this error term will be the dominating one and extra $q^\epsilon$ would be crucial.

In the principal terms with $1<y$ we split the sum with $\frac{Xq^{1/2+\epsilon}}{\sqrt{\pi}}$, so the tail of the series contributes $q^{-1000}$, while the finite part is estimated trivially assuming $\exp(\ldots)=O(1)$.
$$\phi(q)\sum_{n_1,n_2\le N}|r(n_1){r(n_2)}|\sum_{\substack{n>\frac{X\sqrt{q}}{\sqrt{\pi}}\\\congruence{\pm n_1n}{ n_2}{q}}}\frac{1}{\sqrt{n}}\exp{\left(-\frac{n\sqrt{\pi}}{X\sqrt{q}}\right)}\ll \phi(q)\sum\limits_{n}|r(n)|^2\cdot\frac{X^{1/2}N}{q^{3/4-\epsilon}}.$$

Now consider the remaining case $y<1$, here we have $V_i(y)=1+O(y^{1/2})$.

The dual sums give the second dominating error term, the same as the previous one.
$$\frac{\phi(q)}{\sqrt{q}}\sum_{n_1,n_2\le N}|r(n_1)\overline{r(n_2)}|\sum_{n<\frac{\sqrt{q}}{\sqrt{\pi}X}}\frac{1}{\sqrt{n}}\left(1+\left(\frac{n\sqrt{\pi}X}{\sqrt{q}}\right)^{1/2}\right)\ll \frac{\phi(q)}{q^{1/4}X^{1/2}}N\sum\limits_{n\le N}|r(n)|^2.$$

Regarding the principal term we note that the contribution of all the terms is positive by Lemma \ref{v_lemma}, so we can omit everything except $h=1$ since we only care about the real part.
$$\sum\limits_{h\in\ker H\cup-\ker H}\1{\congruence{h nn_1}{n_2}{q}}\ge\1{\congruence{nn_1}{n_2}{q}}.$$

From here on we observe that the congruence $\congruence{-nn_1}{n_2}{q}$ has no solutions and $\congruence{nn_1}{n_2}{q}$ implies equality $nn_1=n_2$ as long as $X\sqrt{q/\pi}N<q$, i.e. $XN<\sqrt{\pi}q^{1/2}.$

Therefore the error term coming from principle sums with $y<1$ is
$$\phi(q)\sum_{nn_1\le N}\frac{r(n_1)r(nn_1)}{\sqrt{n}}\left(\frac{n}{X\sqrt{q}}\right)^{1/2}\ll \frac{\phi(q)}{(X\sqrt{q})^{1/2}}\sum_{\substack{k\le N\\ d\mid k}}r(d)r(k)\ll$$
by the rearrangement inequality and divisor function bound
$$\frac{\phi(q)}{X^{1/2}{q}^{1/4-\epsilon}} \sum_{n\le N}|r(n)|^2.$$

Then the resulting error term is
\begin{equation}\label{et}
\#H\sum_{n\le N}|r(n)|^2 \left(\frac{N}{q^{1/4}X^{1/2}}+\frac{X^{1/2}N}{q^{3/4-\epsilon}}+\frac{N}{q^{1/4}X^{1/2}}+\frac{1}{X^{1/2}q^{1/4-\epsilon}}+\frac{NX^{1/2}q^{1/4}}{\phi(q)}\right),
\end{equation}
where the first four terms comes as in the text: the first one is dual with $y>1$, the last one is principal with $y<1$, and the fifth term is from the issue with $L(\chi_0)$ having a different functional equation.

Although the main term coming from the 1 in $V_i(y)=1+O(y^{1/2})$ is just
$$\text{MT}=\#H\sum_{nn_1\le N}\frac{r(n)r(nn_1)}{\sqrt{n}}.$$

In order to obtain the best possible bound, we want to make $N$ as large as we could.
The expected order of $M_2/M_1$ is about $\sqrt{\log q}$, so the error term should be $O(1)$, since allowing anything larger would not improve the result. Recall we also have the condition $XN<\sqrt{\pi}q^{1/2}$, to simplify it one may assume $XN=q^{1/2}$. Now consider the first term $\frac{N}{q^{1/4}X^{1/2}}=\frac{N^{3/2}}{q^{1/2}}$, so the largest value of $N$ is $N=q^{1/3}$, and $X=q^{1/6}$, hence
$$M_2-\text{MT}\sim\#H\sum_{n\le N}|r(n)|^2\ll \phi(q)\sum_{n\le N}|r(n)|^2=M_1.$$

So by the theorem \ref{sound} we have
$$\max\limits_{\chi\in H} |L(\chi,1/2)|\ge \frac{\Re M_2}{M_1}\ge\frac{\#H}{\phi(q)}\max\limits_r\sum\limits_{nn_1\le N}\frac{r(n)r(nn_1)}{\sqrt{n}}\mathlarger{/}\sum\limits_{n\le N}|r(n)|^2+O(1)=$$
$$\exp\left(\left(\frac{1}{\sqrt{3}}+o(1)\right)\frac{\sqrt{\log q}}{\sqrt{\log\log q}}-\log[\mathcal{X}:H]\right).$$
\begin{remark*}
The constant $1/\sqrt{3}$ comes from the exponent in $N=q^{1/3}.$
\end{remark*}

\section{Non-trivial coset}
In this section we will prove Theorem \ref{thm2} in the introduction.

We consider maxima over a coset $\chi_1H$ of an arbitrary subgroup $H\le \mathcal{X}$, where $\chi_1\in \mathcal{X}$ is its representative.

By the same setup with
$$M_1=\sum_{\chi\in\chi_1H}|R^2(\chi)|=\#H\sum_{h\in\ker H}\chi_1(h)\sum_{n_1,n_2\le N}r(n_1)\overline{r(n_2)}\1{\congruence{hn_1}{n_2}{q}}.$$
$$M_2=\sum_{\chi\in\chi_1H}L(\chi,1/2)|R^2(\chi)|=O\left(NX^{1/2}q^{1/4}\sum_n |r(n)|^2\right)+\sum_{\chi\in\chi_1H}L^*(\chi,1/2)|R^2(\chi)|.$$
$$M_2+O\left(NX^{1/2}q^{1/4}\sum_n |r(n)|^2\right)=\sum_{n_1,n_2\le N}r(n_1)\overline{r(n_2)}\sum_{\chi\in \chi_1H}\chi(n_1)\overline{\chi(n_2)}L^*(\chi,1/2).$$

From here we set $X=1$, $N$ will be chosen small so it would not cause any issues, say $N\ll q^{1/5}$.

Now we have three cases: either $\chi_1H\subset\text{even}$, or $\chi_1H\subset\text{odd}$, or we observe that an intersection of cosets ($H$ and even or odd characters), is a coset of intersection of subgroups if non-empty.

Let us consider only the case when $H$ has both even and odd characters.

In other cases we do not split $\chi_1H$, since one has the same approximate functional equation for all $\chi\in\chi_1H$. So, we apply orthogonalities straight to $\chi_1H$. The only difference is the fact that the kernel remains itself, whereas for the splitting case the kernel will be $\ker(H\cap \mathcal{X}_0)=\ker H\cup-\ker H$, and we have two terms.

So the sum over characters due to the orthogonalities (Lemma \ref{principal_orthogonality} and Lemma \ref{dual_orthogonality}) is equal to the sum of the following expressions coming from principal and dual terms, denoted as $\text{PT}$ and $\text{DT}$ respectively

$$\text{PT}=\frac{\#H}{2}\sum_n \frac{\sum\limits_{h\in\ker H\cup-\ker H}\1{\congruence{h nn_1}{n_2}{q}}}{\sqrt{n}}(\chi'(h)V_0+\chi''(h)V_1)\left(\frac{n\sqrt{\pi}}{X\sqrt{q}}\right),$$

\begin{equation}\begin{split}\text{DT}=\frac{\#H}{2\sqrt{q}} \sum_n \frac{1}{\sqrt{n}}\Biggl(\sum\limits_{h\in\ker H\cup\{-\ker H\}}
\chi'\left(\frac{hn_1\overline{nn_2}}{q}\right)e\left(\frac{hn_1\overline{nn_2}}{q}\right)V_0\left(\frac{n\sqrt{\pi}X}{\sqrt{q}}\right)+\\\chi''\left(\frac{hn_1\overline{nn_2}}{q}\right)e\left(\frac{hn_1\overline{nn_2}}{q}\right)V_1\left(\frac{n\sqrt{\pi}X}{\sqrt{q}}\right)\Biggr),\end{split}
\end{equation}
where $\chi'$ and $\chi''$ are any representatives of the intersections of $\chi_1H$ with even and odd characters respectively as cosets of $H\cap\text{even}$.

Basically we say that
$\chi_1 H\cap \mathcal{X}_0=\chi'(H\cap \mathcal{X}_0),\;\;\; \chi_1 H\cap \mathcal{X}_1=\chi''(H\cap \mathcal{X}_0).$

Contribution of the dual sums is estimated as previously: we bound $\chi$-s and exponents by one, split the argument of $V_i$ into two ranges and the same estimate holds. See \eqref{errorterm}.

With the principal terms we cannot use the positivity trick since some values of $\chi'$ and $\chi''$ on the elements of $\ker H\cup-\ker H$ are negative. Hence we have to deal with $h\ne1$ in a different way. Also we have a similar issue with $M_1$: the expected value is $\#H\sum_n|r(n)|^2$, while bounding trivially we get $\phi(q)\sum|r(n)|^2$.

Now the main term is the contribution of $h=1$ with arguments of $V_i$ being less than 1. 
$$\text{MT}=\#H\sum_{nn_1\le N}\frac{r(n)\overline{r(nn_1)}}{\sqrt{n}}.$$

While the new error term comes from $h\ne1$ and argument of $V_i$ less than 1 (the case larger than 1 is bounded as in the full group case). Here we state a lemma for dealing with small $n$ in the error term.
\begin{lemma}\label{lemma5}
    Let $q$ be a large prime, $N\in\N$, $h\in G\le(\Z/q\Z)^\times$, $k=\#G>1$,
    and $n,n_1,n_2\in\N,\;\delta\in\R$ such that $n_1,n_2\le N=q^\nu,\; n\le q^\delta$ with $0<\nu<1$ and $0<\delta< \frac{1}{k-1}$.
    
    Then having $$\nu+\delta<\frac{1}{k-1}\left(1-\frac{\log k}{\log q}\right)$$ ensures the congruence $\congruence{hnn_1}{n_2}{q}$ has no solutions for $h\neq1$.
\end{lemma}
\begin{proof}
Assume $\congruence{hnn_1}{n_2}{q}$ and $h\neq1$.

One can raise the congruence to power $k$ to get $\congruence{(nn_1)^{k}}{n_2^{k}}{q}$ using $\congruence{h^{k}}{1}{q}$. Now we put both terms to the left-hand side and decompose into

$$\congruence{(nn_1-n_2)((nn_1)^{k-1}+\ldots+(n_2)^{k-1})}{0}{q}.$$
And since $h\ne1$ we get $\congruence{(nn_1)^{k-1}+\ldots+(n_2)^{k-1}}{0}{q}.$ If the left-hand side is less than $q$, then there are no solutions. It is enough that $$N^{k-1}\frac{n^{k}-1}{n-1}<q,$$
which is implied by the inequality on $\nu+\delta$.
\end{proof}

Now we apply the lemma with $G=\ker H\cup-\ker H$, $k=\#2\ker H$, so there is no contribution from $n\le q^\delta$.

Note that the assumptions of the lemma imply by a similar argument that in $M_1$ there will be no solutions at all for $h\ne1$. 

The error term becomes
$$\sum_{h\in\ker H\cup-\ker H\setminus\{1\}}\#H\sum_{n_1,n_2\le N}r(n_1)\overline{r(n_2)}\sum_{q^\delta\le n\le X\sqrt{q}/\sqrt{\pi}}\frac{\1{\congruence{hnn_1}{n_2}{q}}}{\sqrt{n}},$$
the sum over $n$ has at most one term due to congruence condition, and this term is less than $q^{-\delta/2}$, so it is less than
$$\ll\ker HNq^{-\delta/2}\cdot\#H\sum_n |r(n)|^2.$$

\begin{equation}
\text{So we want to maximize $N$ with the following constraints: }\begin{cases}\nu+\delta<\frac{1}{k-1}\left(1-\frac{\log k}{\log q}\right),\\ \#\ker H N= q^{\delta/2}.\end{cases} \phantom{\hspace{9cm}} 
\end{equation}
\begin{remark*}
We can afford the equality in the last condition as $1=O(1)$.
\end{remark*}

Now substitute $\#\ker H=q^\alpha$ and we get
$$\nu<\frac{1}{3(k-1)}\left(1-\alpha-\frac{\log 2}{\log q}\right)-\frac{2}{3}\alpha.$$

The final form is the same:
$$M_1=\#H\sum_{n\le N}|r(n)|^2,$$
$$M_2=\#H\sum_{nn_1\le N}\frac{r(n)\overline{r(nn_1)}}{\sqrt{n}}+O(M_1),$$
applying the theorem \ref{sound} we get the bound for $\max L(\chi,1/2)$.

$$\max_{\chi\in\chi_1H}|L(\chi,1/2)|\ge\exp{\left(\left(\sqrt{\nu}+o(1)\right)\mathcal{L}\right)}.$$
In the range $k=o\left(\frac{\log q}{\log\log q}\right)$ stated in the theorem extra terms in the expression for $\nu$ are small, so $$\nu=\frac{1}{\sqrt{k-1}}\left(\frac{1}{\sqrt{3}}+o(1)\right).$$

\section{Acknowledgements}

This paper has been written as a mémoire during M1 "Parcour Jacques Hadamard" at Paris-Saclay university. The author expresses gratitude to his advisor Asbjørn Christian Nordentoft for the pleasant working atmosphere, his suggestions and patience. Also he thanks Kevin Destagnol and Étienne Fouvry for their support and motivation.

The financial help of the Fondation Mathématique Jacques Hadamard is gratefully acknowledged.

\bibliographystyle{plain}

\end{document}